\numberwithin{equation}{section}
\newtheorem{thrm}{Theorem}[section]
\newtheorem{lemma}[thrm]{Lemma}
\newtheorem{cor}[thrm]{Corollary}
\newtheorem{dfn}[thrm]{Definition}
\newtheorem{conv}[thrm]{Convention}
\def\gr{\nabla f}
\def\bi{\nabla}
\begin{document}

\begin{abstract}
In this paper we establish an analogue of the classical Lichnerowicz' theorem giving a sharp lower bound of the first non-zero  eigenvalue of the sub-Laplacian on a compact seven-dimensional quaternionic contact manifold, assuming a lower bound  of the qc-Ricci tensor, torsion tensor and its distinguished  covariant derivatives.

\end{abstract}

\keywords{Quaternionic Contact Structures, Sub-Laplacian, First Eigenvalue, Lichnerowicz Inequality, 3-Sasakian}
\subjclass[2010]{53C21,58J60,53C17,35P15,53C25}
\title[A Lichnerowicz-type result]{A Lichnerowicz-type result on a seven-dimensional quaternionic contact manifold} 

\date{\today }
\author{Alexander Petkov}
\address[Alexander Petkov]{University of Sofia, Faculty of Mathematics and
Informatics, blvd. James Bourchier 5, 1164, Sofia, Bulgaria}
\email{a\_petkov\_fmi@abv.bg}
\maketitle
\tableofcontents


\setcounter{tocdepth}{2}

\section{Introduction}
The aim of this paper is to prove a seven-dimensional version of the main result established in \cite{IPV1}. Namely, we give a sharp lower bound of the first non-zero eigenvalue of the sub-Laplacian on a compact seven-dimensional quaternionic contact (abbr. QC) manifold, assuming some condition on the qc-Ricci tensor, torsion tensor and its derivatives. We pay attention to the fact, that a similar result has established in our resent paper \cite{IPV3}, in which it is concerned the so called P-function and its non-negativity for any eigenfunction. 

The problem concerning the sharp estimation of the first eigenvalue of the sub-Laplacian arises from the classical Lichnerowicz' theorem \cite{Li}, giving a sharp lower bound of the first eigenvalue of the (Riemannian) Laplacian on a compact Riemannian manifold, assuming some a-priori estimate on the Ricci tensor. More precisely, it was shown in \cite{Li} that for every compact Riemannian manifold $(M,g)$ of dimension $n$ for which the a-priori estimate 
\begin{equation}\label{Lichcon}
Ric(X,Y)\geq (n-1)g(X,Y)
\end{equation}
holds true, the first positive eigenvalue $\lambda_1$ of the Laplacian satisfies the sharp estimate 
\begin{equation}\label{shest1}
\lambda_1\geq n.
\end{equation} 
The above estimate is sharp in the sense that the equality is attained on the round unit $n$-dimensional sphere $S^n(1).$

In a natural way, a similar question arises in the sub-Riemannian geometry. Recently, a number of  Lichnerowicz-type results are established in the CR case. All of  them are provoked by the Greenleaf's work  \cite{Gr}, in which it is  obtained a Lichnerowicz-type result for a $(2n+1)$-dimensional CR manifold, $n\geq 3$.  Subsequently, the above result was extended to the case $n=2$ in \cite{LL}, where the authors have used the Greenleaf's method. Another, more restrictive result can be found in \cite{Bar}. In the quaternionic contact geometry a sharp estimate of the first eigenvalue of the sub-Laplacian is established in \cite{IPV1} for the $(4n+3)$-dimensional QC manifolds, $n\geq 2$.

The situation is more delicate in the lowest dimensions in the CR geometry and the QC geometry. The reason that this happens is that in the low-dimensional geometries appear some additional difficulties, which require a different geometric analysis, see \cite{IMV, IMV1} for the QC case. In the CR, as well as in the QC low-dimensional geometries it is necessary to be involved some different methods in comparison with these in the bigger dimensions. An exception to the rule is the conformal flatness problem, where there are no differences between the seven and the bigger dimensional  cases in the QC geometry, in contrast to the CR geometry, see \cite{Car,ChM,IVZ,IV}. In the three-dimensional CR geometry a sharp estimate is obtained in \cite{Chi06}, where, in contrast to the bigger dimensions, the author involves the CR-Paneitz operator and imposes the additional assumption for its non-negativity (some related results in the CR geometry appear in \cite{CC09a,CC09b,CC07,CCC07} and \cite{ChW}). In the seven-dimensional QC geometry a similar result has established in \cite{IPV3}, where the authors introduce a non-linear $C$ operator, motivated by the Paneitz operators,  which appear in the Riemannian and the CR geometries. Precisely, the next theorem holds.
\begin{thrm}\label{mainpan}
\cite{IPV3} Let $(M,g,\mathbb{Q})$ be a compact quaternionic contact
manifold of dimension seven. Suppose there is a positive constant $k_0$ such
that 
the qc-Ricci tensor $Ric$ and the torsion tensor $T^0$ satisfy the Lichnerowicz
type inequality
\begin{equation}  \label{7Dcondm-app1}
Ric(X,X)+6T^0(X,X)\geq k_0 g(X,X)
\end{equation}
for every horizontal vector field $X$. If, in addition, the $P-$function of
any eigenfunction of the sub-Laplacian is non-negative, then for any
eigenvalue $\lambda$ of the sub-Laplacian $\triangle$ we have the inequality
\begin{equation}\label{shestim3}
\lambda \ge \frac13k_0.
\end{equation}
\end{thrm}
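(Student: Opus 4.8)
The plan is to run the sub-Riemannian Bochner technique relative to the Biquard connection, in the spirit of Greenleaf's argument and of the higher-dimensional quaternionic contact result of \cite{IPV1}, but with the low-dimensional corrections forced by $n=1$. Fix an eigenfunction $f$ of the sub-Laplacian, normalized so that $\triangle f = -\lambda f$ with $\lambda>0$, and record the two elementary consequences $\int_M|\nabla f|^2 = \lambda\int_M f^2$ and $\int_M(\triangle f)^2 = \lambda^2\int_M f^2$, obtained by integration by parts on the closed manifold $M$. The first step is to write the quaternionic contact Bochner-type identity for the horizontal energy density, expressing $\tfrac12\triangle|\nabla f|^2$ as the sum of the squared horizontal Hessian $|\nabla^2 f|^2$, the transport term $g(\nabla f,\nabla\triangle f)$, the curvature term $Ric(\nabla f,\nabla f)$, a torsion term built from $T^0$, and --- crucially in dimension seven --- a family of terms involving the three Reeb directions, the covariant derivatives of $f$ along them, and the distinguished derivatives of the torsion that enter the hypothesis.

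Integrating this identity over the closed manifold annihilates the left-hand side, and substituting the eigenvalue equation converts the transport term into $-\lambda^2\int_M f^2$. At this stage one has at hand the elementary lower bound for the symmetric horizontal Hessian on the four-dimensional distribution, namely $|\nabla^2 f|^2 \ge \tfrac14(\triangle f)^2$ when $n=1$, giving $\int_M|\nabla^2 f|^2 \ge \tfrac14\lambda^2\int_M f^2$; this bound will be combined with the vertical contributions rather than used in isolation. The Lichnerowicz hypothesis \eqref{7Dcondm-app1} is then applied to the combination $Ric(\nabla f,\nabla f)+6\,T^0(\nabla f,\nabla f)$, yielding $k_0\int_M|\nabla f|^2 = k_0\lambda\int_M f^2$; the specific coefficient $6$ is precisely what is required so that the torsion term produced by the Bochner identity, once it is merged with the torsion contribution extracted from the vertical terms, matches the multiple of $T^0$ appearing in the assumed inequality.

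The main obstacle is exactly this package of vertical and third-order terms. When $n\ge 2$ they reorganize into a manifestly non-negative quadratic form and may simply be discarded, which is how the estimate is proved in \cite{IPV1}; at $n=1$ the coefficient controlling that quadratic form degenerates and the naive estimate is lost. This is where the hypothesis on the $P$-function does the decisive work: the $P$-function is the seven-dimensional substitute for the Paneitz operators of Riemannian and CR geometry, designed so that its integral pairing with $f$ reproduces exactly the leftover vertical and torsion-derivative terms, and the assumed non-negativity $\int_M f\,P(f)\ge 0$ lets us drop them with the correct sign. The delicate point, and the step I expect to consume most of the work, is the bookkeeping that identifies the residual terms in the integrated Bochner identity with the $P$-function pairing up to a quantity of definite sign; this requires careful integration by parts against the three Reeb fields together with the qc-commutation relations for the Biquard connection.

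Assembling the pieces, the integrated identity combined with the Hessian bound, the Lichnerowicz hypothesis, and the discarded $P$-function terms collapses to a single inequality of the form $(1-c_1)\lambda \ge c_2\,k_0$ after dividing through by $\lambda\int_M f^2>0$, where $c_1,c_2$ are explicit constants coming from the four-dimensional Hessian bound and the three vertical directions. Rearranging gives $\lambda \ge \tfrac{c_2}{1-c_1}\,k_0 = \tfrac13 k_0$, the sharp estimate \eqref{shestim3}, with the numerical factor $\tfrac13$ tracing back to the three-dimensional vertical space and the three almost complex structures $I_1,I_2,I_3$ of the quaternionic structure.
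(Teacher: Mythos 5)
First, a structural point: the paper you were asked to prove this statement from does not actually prove Theorem~\ref{mainpan} at all --- it is quoted from \cite{IPV3}. The only portion of its proof that is reproduced in this paper is the half it shares with Theorem~\ref{Main}, namely Lemma~\ref{lemma1}: the Bochner formula \eqref{bohh}, the vertical identity \eqref{inteq}, the Hessian bound \eqref{hnorm} and \eqref{eq444}, which combine into the integral inequality \eqref{mainineq}. Your outline follows exactly this route for the first half and then invokes the $P$-function for the second half, which is indeed the strategy of \cite{IPV3}; so the plan is the right one.

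However, there is a genuine gap: the decisive second half is asserted rather than proved. You never define the $P$-function (equivalently the $C$-operator), and you never perform the integration by parts that converts its assumed non-negativity into a usable integral inequality; you explicitly defer this as ``bookkeeping''. That step is not routine residue --- every constant in the statement lives inside it. Concretely, feeding \eqref{7Dcondm-app1} into \eqref{mainineq} gives
\begin{equation*}
\int_M\Big[\big(k_0-\tfrac34\lambda\big)|\nabla f|^2-8T^0(\nabla f,\nabla f)-12\sum_{s=1}^3\big(df(\xi_s)\big)^2\Big]\,Vol_\eta\le 0,
\end{equation*}
so what the $P$-function hypothesis must deliver is precisely an estimate of the form
\begin{equation*}
\int_M\Big[12\sum_{s=1}^3\big(df(\xi_s)\big)^2+8T^0(\nabla f,\nabla f)\Big]\,Vol_\eta\le \tfrac94\,\lambda\int_M|\nabla f|^2\,Vol_\eta,
\end{equation*}
and obtaining it requires, besides the pairing identity itself, converting the scalar-curvature term that the pairing produces into $Ric$ and $T^0$ through the first identity of \eqref{sixtyfour}. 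None of this appears in your proposal, so neither the coefficient $6$ in \eqref{7Dcondm-app1} nor the final $\tfrac13$ can be checked from what you wrote. Two further inaccuracies: (i) the Hessian bound you state, $|\nabla^2f|^2\ge\tfrac14(\triangle f)^2$, is only the $[3]$-component estimate; the proof needs the full \eqref{hnorm}, whose extra term $4\sum_{s=1}^3\big(df(\xi_s)\big)^2$ is exactly what turns the coefficient $-16$ produced by \eqref{inteq} into the $-12$ of \eqref{mainineq}, and dropping it degrades the final constant. (ii) Your account of the coefficient $6$ is backwards: the Bochner identity merged with the vertical terms yields $-2T^0$ (as in \eqref{mainineq}), not a multiple matching the hypothesis; the $+6T^0$ emerges only after the $P$-function inequality is added in. Finally, a harmless but real convention clash: you write $\triangle f=-\lambda f$, while the paper's sub-Laplacian is the positive operator, $\triangle f=\lambda f$, and the signs in \eqref{bohh} are written for the latter.
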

 Moreover, another proof of the main result in \cite{IPV1} is given in \cite{IPV3} via the (established) non-negativity of the $P$-function in the higher dimensions.

 Another Lichnerowicz-type result in the $3$D CR geometry is proved in \cite{LL}, where the Ricci tensor, the torsion tensor and some its covariant derivatives partake in the a-priori condition. The main result of the present paper is namely a QC analog of the upper result.

Our main result follows.

\begin{thrm}\label{Main}
Let $(M,g,\mathbb{Q})$ be a seven-dimensional compact quaternionic contact
manifold. Suppose there exists a positive constant $k_0$ such
that the qc-Ricci tensor $Ric$ and the torsion tensor $T^0$ satisfy the Lichnerowicz
type inequality
\begin{equation}  \label{7Dcondm-app}
Ric(X,X)-2T^0(X,X)-\frac{36}{k_0}A(X)\geq k_0 g(X,X)
\end{equation}
for any horizontal vector field $X$, where 
\begin{equation*}
\begin{aligned}
A(X)\stackrel{\text{def}}{=}&\sum_{s=1}^3\Big[\frac{1}{6}(I_sX)^2S+2|T(\xi_s,X)|^2-\frac{2}{9}I_sX\Big((\nabla_{e_a}T^0)(e_a,I_sX)\Big)\\+&\frac{1}{6}I_sX\Big((\nabla_{e_a}T)(\xi_u,e_a,I_tX)-(\nabla_{e_a}T)(\xi_t,e_a,I_uX)\Big)-(\nabla_{\xi_s}T)(\xi_s,X,X)\Big].
\end{aligned}
\end{equation*} Then for the first nonzero eigenvalue $\lambda$ of the sub-Laplacian the next sharp estimate holds true
\begin{equation}\label{shest2}
\lambda \ge \frac13k_0.
\end{equation}
\end{thrm}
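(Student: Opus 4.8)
The plan is to run the Bochner method in the seven-dimensional QC setting, following \cite{IPV1} but retaining the extra torsion terms that survive in this lowest admissible dimension (here $4n+3=7$, so $n=1$). Let $f$ be an eigenfunction of the sub-Laplacian, $\triangle f=-\lambda f$, whence $\int_M|\nabla f|^2=\lambda\int_M f^2$. The starting point is the QC Bochner-type identity expressing $\tfrac12\triangle|\nabla f|^2$ through the squared horizontal Hessian $|\nabla^2 f|^2$, the coupling $\langle\nabla f,\nabla\triangle f\rangle$, the qc-Ricci contraction $Ric(\nabla f,\nabla f)$, the torsion contraction $T^0(\nabla f,\nabla f)$, and a remainder assembled from $T$, the complex structures $I_s$, and the Reeb fields $\xi_s$. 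Integrating over the compact manifold annihilates the divergence on the left and, via the eigenvalue equation, replaces the coupling by $-\lambda\int_M|\nabla f|^2$. Unlike Theorem \ref{mainpan}, no positivity of an auxiliary $P$-function is assumed; the cost is the stronger pointwise condition involving $A(X)$.

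First I would decompose the horizontal Hessian $\nabla^2 f$ relative to the quaternionic structure into its trace part, its $Sp(n)Sp(1)$-invariant symmetric part, and the three pieces twisted by $I_1,I_2,I_3$. The decisive algebraic input is a sharp lower bound for $\int_M|\nabla^2 f|^2$, obtained by discarding a manifestly nonnegative combination of these components; the residual is a multiple of $\int_M(\triangle f)^2=\lambda^2\int_M f^2$ whose coefficient, together with the factor $3$ coming from the three complex structures, ultimately yields the constant $\tfrac13$ in the estimate. The twisted components are not simply thrown away: integrating them by parts and commuting horizontal covariant derivatives through the QC Ricci identities feeds further curvature and torsion contributions into the balance, which is where the seven-dimensional subtleties enter.

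The heart of the argument, and the step I expect to be the main obstacle, is showing that all the vertical and torsion-derivative contributions reorganize into exactly $\int_M A(\nabla f)$. Concretely, the scalar-curvature pieces $\tfrac16(I_s\nabla f)^2 S$, the squares $2|T(\xi_s,\nabla f)|^2$, the horizontal divergences of torsion $(\nabla_{e_a}T^0)(e_a,I_s\nabla f)$ and the mixed term $(\nabla_{e_a}T)(\xi_u,e_a,I_t\nabla f)-(\nabla_{e_a}T)(\xi_t,e_a,I_u\nabla f)$, together with the vertical derivative $(\nabla_{\xi_s}T)(\xi_s,\nabla f,\nabla f)$, all surface when a vertical derivative is commuted past a horizontal one and the QC structure equations are invoked. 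In the higher-dimensional theorem of \cite{IPV1} several of these contractions either vanish or are absorbed into the higher-rank Ricci identities, which is why no analogue of $A(X)$ appears there; in dimension seven they persist and must be carried intact. Getting every coefficient right in this reorganization --- matching $\tfrac16$, $2$, $\tfrac29$, $\tfrac16$ and the sign of the $\xi_s$-derivative --- is the delicate bookkeeping on which the whole proof rests, and it mirrors the role of the torsion-derivative terms in the $3$D CR estimate of \cite{LL}.

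Finally I would insert the a-priori inequality \eqref{7Dcondm-app}, applied to $X=\nabla f$, into the integrated identity. The $k_0$-dependent coefficient $\tfrac{36}{k_0}$ is exactly what is needed so that, after collecting the Hessian bound, the Ricci--torsion term and the assembled $A$-term, the resulting inequality closes at the critical value: one completes squares and checks consistency against the target $\lambda=\tfrac13 k_0$, arriving at an inequality of the schematic form $\big(\lambda-\tfrac13 k_0\big)\cdot(\text{nonnegative})\ge 0$ that forces \eqref{shest2}. Sharpness is then exhibited on the round $3$-Sasakian seven-sphere $S^7(1)$, where the Hessian inequality and the curvature bound hold simultaneously with equality and the value $\lambda=\tfrac13 k_0$ is attained.
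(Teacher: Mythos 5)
Your outline reproduces only the first step of the paper's argument: the horizontal Bochner formula of \cite{IPV1}, the $Sp(n)Sp(1)$-decomposition of the Hessian with its seven-dimensional consequence $|\nabla^2f|^2\geq 4\sum_{s=1}^3\big(df(\xi_s)\big)^2+\frac14(\Delta f)^2$, and the resulting integrated inequality
\[
\int_M\Big[Ric(\nabla f,\nabla f)-2T^0(\nabla f,\nabla f)-\tfrac34\lambda|\nabla f|^2-12\sum_{s=1}^3\big(df(\xi_s)\big)^2\Big]\,Vol_\eta\leq 0,
\]
which is the paper's Lemma~\ref{lemma1}. What is missing is the idea that makes the theorem work in dimension seven: the whole difficulty is to bound $\int_M\sum_s\big(df(\xi_s)\big)^2$ from above, and this cannot be done inside the horizontal Bochner identity. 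The paper introduces a second, \emph{vertical} Bochner formula (Lemma~\ref{lema2}), computing $\sum_s\Delta(\xi_sf)^2$; after inserting the curvature representation \eqref{d3n5}, the Ricci $2$-form formulas \eqref{d3n6}, and several integrations by parts that require comparing the Biquard and Levi--Civita divergences (Lemmas~\ref{lema3} and~\ref{lema4}), this yields
\[
\int_M\sum_{s=1}^3|\nabla(\xi_sf)|^2\,Vol_\eta=\int_M\Big[A(\nabla f)+\lambda\sum_{s=1}^3\big(df(\xi_s)\big)^2\Big]\,Vol_\eta .
\]
This is where every term of $A(X)$ comes from --- not, as you assert, from commuting a vertical past a horizontal derivative while handling the twisted Hessian components. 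For instance, the terms $2|T(\xi_s,\nabla f)|^2$ and $-(\nabla_{\xi_s}T)(\xi_s,\nabla f,\nabla f)$ arise from integrating $df(\xi_s)\,e_a\big(T(\xi_s,e_a,\nabla f)\big)$ by parts against the vertical field $T(\xi_s,\nabla f,\nabla f)\,\xi_s$, using the fact that the Biquard and Riemannian divergences of a vertical field agree; nothing in your plan produces such terms, nor the gradient squares $|\nabla(\xi_sf)|^2$ that they must be balanced against.

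The second missing ingredient is the interpolation that couples the two Bochner formulas and is the only possible source of the $k_0$-dependent coefficient. The paper uses the identity $\int_M\sum_s\big(df(\xi_s)\big)^2\,Vol_\eta=\frac14\int_M\sum_s df(I_se_a)\,d(\xi_sf)(e_a)\,Vol_\eta$ (a consequence of \eqref{xi1}) and Cauchy--Schwarz with a free parameter $b>0$ to get
\[
\sum_{s=1}^3\int_M\big(df(\xi_s)\big)^2\,Vol_\eta\leq \frac{3b}{32}\int_M|\nabla f|^2\,Vol_\eta+\frac{1}{2b}\sum_{s=1}^3\int_M|\nabla(\xi_sf)|^2\,Vol_\eta ,
\]
then eliminates $\int_M\sum_s|\nabla(\xi_sf)|^2$ via the vertical formula (together with the same Cauchy--Schwarz applied once more with $b$ replaced by $\lambda$), substitutes into the horizontal inequality to obtain \eqref{key1}, and only at the very end chooses $b=\frac{k_0}{3}$, which turns $\frac{12}{b}$ into the coefficient $\frac{36}{k_0}$ of hypothesis \eqref{7Dcondm-app} and yields the quadratic inequality $(3\lambda-k_0)(9\lambda+5k_0)\geq 0$, hence $\lambda\geq\frac13 k_0$. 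Your proposal instead expects an exact integrated identity to deliver the coefficient $\frac{36}{k_0}$, but no identity can produce a constant depending on the a-priori bound $k_0$: that coefficient is an artifact of optimizing a free Cauchy--Schwarz parameter. Without the vertical Bochner formula and this parameterized interpolation, the term $-12\sum_s\int_M\big(df(\xi_s)\big)^2$ in the first display has the wrong sign and your argument cannot close.
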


The torsion tensor $T^0$, the QC-Ricci tensor $Ric$ and the normalized QC-scalar curvature $S$ are defined in \eqref{Tcompnts} and \eqref{qscs}. See Convention~\ref{conven} about the summation rules in the definition of the function $A(X)$.

Another natural question that arises from the Riemannian geometry is stadying the case of equality in the estimate \eqref{shest2} of Theorem~\ref{Main}. The corresponding problem in the Riemannian case was considered by Obata \cite{O3}. More precisely, as a consequence of his more general result, it can be stated, that the equality in \eqref{shest1} is attained iff  the Riemannian manifold $(M,g)$ is isometrical to the unit sphere $S^n(1)$ endowed with the round metric, as \eqref{Lichcon} holds. This result has provoked a similar question in the sub-Riemannian geometry and in particular in the CR geometry, where the problem is successfully solved, see \cite{IVO,IV3,LW1}. 

The corresponding question in the QC geometry is completely resolved for the bigger dimensions ($dim M\geq 11$) in \cite{IPV2}, but it remains still open in the seven-dimensional case, except of the $3-$Sasakian case \cite[Corollary~1.2]{IPV3}, where it is shown that the minimal possible eigenvalue of the sub-Laplacian is attained only on the standard unit $3-$Sasakian sphere (up to a QC-equivalence). 

In \cite{IMV2} the authors describe explicitly the eigenfunctions corresponding to the first eigenvalue of the sub-Laplacian on the standard unit $3-$Sasakian sphere. 

In connection with the studying of the equality cases in the estimates \eqref{shestim3} and \eqref{shest2} we get as a simply consequence from Theorem~\ref{mainpan} and Theorem~\ref{Main} the next
\begin{cor}\label{corol}
Let $(M,g,\mathbb{Q})$ be a compact quaternionic contact manifold of dimension seven and $f$ be an arbitrary eigenfunction of the first eigenvalue $\lambda$ of the sub-Laplacian. Assume that some of the next a-priori conditions holds:
\begin{itemize}
\item [a)] The inequality \eqref{7Dcondm-app1} is satisfied and $T^0(\nabla f,\nabla f)\geq0$ (resp. $T^0(\nabla f,\nabla f)\leq 0$).
\item [b)] The inequality \eqref{7Dcondm-app} is satisfied and $2T^0(\nabla f,\nabla f)-\frac{36}{k_0}A(\nabla f)\geq 0$ (resp. $2T^0(\nabla f,\nabla f)-\frac{36}{k_0}A(\nabla f)\leq 0$).
\end{itemize}
If, additionally, $\lambda$ takes its minimal possible value, $\lambda=\frac{1}{3}k_0$, then the next sharp estimate
\begin{equation}\label{shestS}
S\leq\frac{k_0}{6}\quad(\textit{resp.}\quad S\geq\frac{k_0}{6})
\end{equation} 
holds.
\end{cor}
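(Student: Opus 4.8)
The plan is to run the same Bochner--Reilly scheme that underlies Theorem~\ref{mainpan} and Theorem~\ref{Main}, but to extract the consequences of \emph{equality} rather than of the inequality. Recall that for the first eigenfunction $f$ of the sub-Laplacian $\triangle$, those proofs integrate a Bochner-type identity and arrive at a relation of the schematic form
\begin{equation*}
3\lambda\int_M|\nabla f|^2=\int_M\Big[Ric(\nabla f,\nabla f)-2T^0(\nabla f,\nabla f)-\tfrac{36}{k_0}A(\nabla f)\Big]+\int_M N(f),
\end{equation*}
where $N(f)\ge 0$ is a sum of squares built from the trace-free horizontal Hessian of $f$ together with the associated torsion squares; in the companion form used for Theorem~\ref{mainpan} the $P$-function plays the role of the corresponding remainder, the two remainders differing by the torsion combination $8T^0+\tfrac{36}{k_0}A$. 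I would take these identities from the cited proofs as my starting point.

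First I would feed in $\lambda=\tfrac13 k_0$, i.e. $3\lambda=k_0$. In case b) the a-priori inequality \eqref{7Dcondm-app} applied to $X=\nabla f$ shows the bracket is pointwise $\ge k_0|\nabla f|^2$, while $N(f)\ge 0$; since their integrals sum to $k_0\int_M|\nabla f|^2$, both must be extremal. Hence $N(f)\equiv 0$ a.e., which is the rigidity (Obata-type) condition forcing the trace-free horizontal Hessian of $f$ to vanish, and simultaneously the a-priori inequality is saturated along $\nabla f$:
\begin{equation*}
Ric(\nabla f,\nabla f)-2T^0(\nabla f,\nabla f)-\tfrac{36}{k_0}A(\nabla f)=k_0|\nabla f|^2 \quad\text{a.e.}
\end{equation*}
The analogous step in case a) uses the companion ($P$-function) identity together with \eqref{7Dcondm-app1}, yielding $\int_M P(f)\le 0$ and again the vanishing of the square term.

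The decisive point is then to exploit the explicit occurrence of $S$ inside $A$. Under the summation convention of Convention~\ref{conven} the first summand of $A(X)$ contributes $\tfrac16\sum_{s=1}^3(I_sX)^2S=\tfrac12 S\,g(X,X)$, so that $\tfrac{36}{k_0}A(X)=\tfrac{18}{k_0}S\,g(X,X)+\tfrac{36}{k_0}\tilde A(X)$, with $\tilde A$ collecting the remaining torsion and covariant-derivative-of-torsion terms. Using the rigidity equations to evaluate $Ric(\nabla f,\nabla f)$ and to reduce the summands of $\tilde A(\nabla f)$, the saturated relation collapses, on $\{\nabla f\neq 0\}$, to an \emph{identity}
\begin{equation*}
2T^0(\nabla f,\nabla f)-\tfrac{36}{k_0}A(\nabla f)=\Big(3-\tfrac{18}{k_0}S\Big)|\nabla f|^2
\end{equation*}
in case b), and to $T^0(\nabla f,\nabla f)=c\,\big(\tfrac{k_0}{6}-S\big)|\nabla f|^2$ with $c>0$ in case a). Feeding in the sign hypothesis of the respective case then gives $\tfrac{18}{k_0}S\le 3$, i.e. $S\le \tfrac{k_0}{6}$, while the reversed sign hypotheses give the reversed bound. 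Since $\nabla f\neq 0$ on a dense set and $S$ is continuous, the estimate propagates to all of $M$ (in the extremal $3$-Sasakian model $S$ is in fact the constant $k_0/6$, which is where sharpness is attained).

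The hard part will be the reduction in the previous paragraph: expressing $Ric(\nabla f,\nabla f)$ and the numerous $\nabla T^0$, $\nabla T$ and $\nabla_{\xi_s}T$ contributions of $\tilde A(\nabla f)$ as multiples of $S\,g(\nabla f,\nabla f)$ and $g(\nabla f,\nabla f)$ once $N(f)\equiv 0$. This requires the QC Ricci/commutation identities and, for the divergence-type summands of $A$, integration by parts against $f$; consequently one should expect the bound in its natural $L^2$-averaged form (weighted by $|\nabla f|^2$), which coincides with the stated pointwise inequality precisely when $S$ is constant. Arranging the numerical constant to be exactly $k_0/6$ (equivalently $\tfrac{18}{k_0}S\le 3$) is the delicate bookkeeping that makes the estimate sharp.
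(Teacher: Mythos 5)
Your first step is sound and coincides with the paper's own argument: with $\lambda=\frac13k_0$ and the choice $b=\frac{k_0}{3}$, the inequality \eqref{key1} becomes
\begin{equation*}
\int_M\Big[Ric(\nabla f,\nabla f)-2T^0(\nabla f,\nabla f)-\tfrac{36}{k_0}A(\nabla f)-k_0|\nabla f|^2\Big]\,Vol_{\eta}\leq 0,
\end{equation*}
while the a-priori condition \eqref{7Dcondm-app} makes the integrand pointwise nonnegative, so it must vanish and the a-priori inequality is saturated along $\nabla f$. (The extra rigidity you invoke --- vanishing of a sum-of-squares remainder $N(f)$ and of the trace-free horizontal Hessian --- is not needed for this sandwich argument.)

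The second half of your argument, however, has a genuine gap, and it originates in a misreading of the notation. In the definition of $A(X)$ the symbol $(I_sX)^2S$ denotes the \emph{second derivative} of the function $S$ along the vector field $I_sX$, i.e. $(I_sX)\big((I_sX)S\big)$; this is exactly how it arises in \eqref{firstt}, where $\int_M\sum_s df(\xi_s)\,dS(I_s\nabla f)\,Vol_\eta$ is integrated by parts (the divergence of $I_s\nabla f$ producing $4df(\xi_s)$ via \eqref{xi1}). It is \emph{not} the quantity $|I_sX|^2S$ --- indeed, on a $3$-Sasakian manifold $S$ is a nonzero constant and this term vanishes, while $|I_sX|^2S$ does not. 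Consequently $A$ contains no summand proportional to $S\,g(X,X)$, your decomposition $\frac{36}{k_0}A(X)=\frac{18}{k_0}S\,g(X,X)+\frac{36}{k_0}\tilde A(X)$ is false, and the entire ``hard part'' you defer (reducing $Ric$ and the $\nabla T^0$, $\nabla T$, $\nabla_{\xi_s}T$ terms along $\nabla f$ to multiples of $S|\nabla f|^2$ and $|\nabla f|^2$) is both unproven and, as it turns out, unnecessary; note also that your target identity $2T^0-\frac{36}{k_0}A=(3-\frac{18}{k_0}S)|\nabla f|^2$ disagrees with the correct one by a factor of $\frac{k_0}{3}$. The mechanism that actually produces $S$ is elementary and does not touch $A$ at all: $S$ enters through the Ricci tensor via the structure equation \eqref{sixtyfour}, which in dimension seven ($n=1$, $U\equiv0$) reads $Ric(X,X)=4T^0(X,X)+6S\,g(X,X)$. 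Substituting this into the saturated identity gives \eqref{corb}, namely $6S|\nabla f|^2+2T^0(\nabla f,\nabla f)-\frac{36}{k_0}A(\nabla f)=k_0|\nabla f|^2$, after which hypothesis b) immediately yields $6S|\nabla f|^2\leq k_0|\nabla f|^2$ (resp. $\geq$), i.e. \eqref{shestS}. For case a) the paper likewise performs no new Bochner analysis: it quotes from \cite[Remark~4.1]{IPV3} the identity $10T^0(\nabla f,\nabla f)+6S|\nabla f|^2=k_0|\nabla f|^2$, valid for the extremal eigenfunction of Theorem~\ref{mainpan}, and concludes directly from the sign of $T^0(\nabla f,\nabla f)$; your plan to re-derive such an identity from the $P$-function remainder is only a sketch.
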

In order to simplify the exposition, we state the next
\begin{conv}\label{conven} 
Throughout this paper we shall suppose that:

\begin{enumerate}[ a)]

\item $X,Y,Z,U$ denote horizontal vector fields, i.e. $%
X,Y,Z,U\in \Gamma(H)$, while $A,B,C,D$ denote arbitrary vector fields, i.e. $A,B,C,D\in\Gamma(TM);$

\item $\{e_1,\dots,e_{4n}\}$ denotes a local orthonormal basis of the
horizontal distribution $H$;

\item if two equal vectors from the basis $%
\{e_1,\dots,e_{4n}\}$ appear in a given formula, then we have summation over them. For example, for a (0,4)-tensor $P$, the
formula $k=P(e_b,e_a,e_a,e_b)$ means $ k=\sum_{a,b=1}^{4n}P(e_b,e_a,e_a,e_b);$

\item the triples $(i,j,k)$ and $(s,t,u)$ denote  cyclic permutations of $(1,2,3)$;

\item $s$ will be a number from the set $\{1,2,3\}$, $s\in\{1,2,3\} $.
\end{enumerate}
\end{conv}

\section{Preliminaries on the quaternionic contact geometry}

The quaternionic contact structures were introduced by O. Biquard \cite{Biq1}. One can think these are quaternionic analogues of the CR structures. We refer the reader to \cite{IMV}, \cite{IV} and \cite{IV2} for comprehensive exposition and further results.

\subsection{Quaternionic contact manifolds and the Biquard connection}
\begin{dfn} A quaternionic contact (QC) structure on a $(4n+3)$-dimensional manifold $M$ is the data of co-dimension three distribution $H$ on $M$ (which is called horizontal space), locally given as the kernel of a $1$-form $\eta=(\eta_1,\eta_2,\eta_3)$ (the contact form) with values in $\mathbb{R}^3$, $H=Ker(\eta),$ which satisfy:
\begin{enumerate}
\item $H$ is equipped with an $Sp(n)Sp(1)$-structure, i.e. there exist a Riemannian metric $g$ on $H$ and  a rank three bundle $\mathbb{Q}$ consisting of endomorphisms on $H$, locally generated by the three almost complex structures $I_s:H\rightarrow H, s=1,2,3,$ satisfying the quaternionic identities: $I_1^2=I_2^2=I_3^2=-id_{|H},\quad I_1I_2=-I_2I_1=I_3$, and which are Hermitian compatible with the metric: $g(I_s\cdot,I_s\cdot)=g(\cdot,\cdot);$
\item the compatibility conditions $$2g(I_sX,Y)=d\eta_s(X,Y), s=1,2,3,$$ hold.
\end{enumerate}
A manifold $M$, endowed with a QC structure is called a quaternionic contact (QC) manifold, and is  denoted by $(M,g,\mathbb{Q})$ (or $(M,g,\mathbb{Q},\eta)$).
\end{dfn}
Note that given a QC structure generates a $2$-sphere bundle $Q$ of almost complex structures on $H$, locally given by $Q=\{aI_1+bI_2+cI_3|a^2+b^2+c^2=1\}$.
As Biquard shows in \cite{Biq1}, given a contact form $\eta$ on $M$ determines in a unique way the metric and the quaternionic structure on the horizontal space $H$ (if they exist). Moreover, the rotation of the contact form and the quaternionic structure (i.e. the almost complex structures $I_1, I_2$ and $I_3$) by the same rotation gives again a contact form and an almost complex structures, satisfying the above conditions (the metric is unchanged). Another essential fact is that given a horizontal distribution and a metric on it determine at most one $2$-sphere bundle of associated contact forms and a corresponding $2$-sphere bundle of almost complex structures \cite{Biq1}.

Basic (and essential) examples of QC manifolds are the quaternionic Heisenberg group $\mathbf{G}(\mathbb{H})$ (the flat model), endowed with the corresponding QC structure, and  the $3$-Sasakian manifolds, see \cite{IV2}.

On a quaternionic contact manifold with a fixed horizontal distribution $H$ and a metric $g$ on it there exists a canonical connection, the \emph{Biquard connection}, defined in \cite{Biq1}. Precisely, the next theorem holds.
\begin{thrm}\label{Biq}[O. Biquard, \cite{Biq1}] Let $(M,g,\mathbb{Q})$ be a QC manifold of dimension $4n+1>7$ with a fixed horizontal distribution $H$ and a metric $g$ on it. Then there exist a unique connection $\nabla$ on $M$ with torsion tensor $T$ and a unique supplementary distribution $V$ to $H$ in $TM$, such that the next conditions hold:
\begin{enumerate}
\item $\nabla$ preserves the decomposition $H\oplus V$ and the $Sp(n)Sp(1)$-structure on $H$, i.e. $\nabla g=0$ and $\nabla\sigma\in\Gamma(\mathbb{Q})$ for any section $\sigma\in\Gamma(\mathbb{Q});$
\item the restriction of the torsion on $H$ is given by $T(X,Y)=-[X,Y]_{|V}$ and for any vector field $\xi\in\Gamma(V)$ the torsion endomorphism $T_\xi(\cdot):=T(\xi,\cdot)_{|H}$ of $H$  lies in $(sp(n)\oplus sp(1))^\perp\subset gl(4n)$;
\item the connection on $V$ is generated by the natural identification $\varphi$ of $V$ with the subspace $sp(1):=span\{I_1,I_2,I_3\}$ of the endomorphisms on $H$, or in other words, $\nabla\varphi=0.$
\end{enumerate}
\end{thrm}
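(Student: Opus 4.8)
The plan is to prove the statement in three stages: first fix the supplementary distribution $V$ by constructing canonical transversal (Reeb) vector fields, then establish uniqueness of $\nabla$ by an algebraic argument on the difference of two admissible connections, and finally establish existence by writing down an explicit Koszul-type formula and checking the three conditions directly. The whole argument is an adaptation of the fundamental theorem of connections with prescribed torsion to the $Sp(n)Sp(1)$-setting, with the dimension hypothesis $n\ge2$ entering at two distinct points.

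First I would construct $V$. Since each $d\eta_s$ restricts on $H$ to the nondegenerate form $2g(I_s\cdot,\cdot)$, I would seek vector fields $\xi_1,\xi_2,\xi_3$ transversal to $H$, normalized by $\eta_s(\xi_t)=\delta_{st}$ and pinned down by the interior-product conditions $d\eta_s(\xi_s,X)=0$ and $d\eta_s(\xi_t,X)=-d\eta_t(\xi_s,X)$ for every horizontal $X$. Writing an arbitrary transversal frame and imposing these relations turns the determination of the $\xi_s$ into a linear system whose coefficients are built from the values $d\eta_s(\xi_t,\cdot)_{|H}$; the point is that for $n\ge2$ this system has a unique solution, so that $V:=\mathrm{span}\{\xi_1,\xi_2,\xi_3\}$ is well defined and canonical. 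I would then extend $g$ to all of $TM$ by declaring $H\perp V$ and $\{\xi_1,\xi_2,\xi_3\}$ orthonormal, and I would use the identification $\varphi$ to transport the $sp(1)$-connection onto $V$, which is precisely the content of condition (3).

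For uniqueness, suppose $\nabla$ and $\nabla'$ both satisfy (1)--(3) with the same $V$. Their difference $A(B,C):=\nabla_BC-\nabla'_BC$ is a tensor. Condition (1) (namely $\nabla g=\nabla'g=0$) forces $A(B,\cdot)$ to be skew-adjoint with respect to $g$, while condition (2), which fixes the restriction $T(X,Y)=-[X,Y]_{|V}$ identically for both connections, forces $A$ to be symmetric in its first two slots on $H\times H$, and constrains the mixed endomorphisms $T_{\xi_s}$ to the prescribed $Sp(n)Sp(1)$-type. The familiar cyclic manipulation, combining skew-adjointness in the last two arguments with the symmetry coming from equal torsion, then yields $A=0$ on the unconstrained part; what remains is to check that the constrained components are killed by the type conditions, for which I would decompose the space of admissible difference tensors and the torsion space under $Sp(n)Sp(1)$ and verify that the natural linear map from one to the other is injective when $n\ge2$.

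Existence I would obtain by inverting this correspondence: define $\nabla$ on $H$ by a Koszul-type formula assembled from the Lie brackets, the forms $d\eta_s$, and the chosen connection on $V$, define $\nabla$ on $V$ by $\nabla\varphi=0$, and then verify line by line that $\nabla g=0$, that $\nabla$ preserves $\mathbb{Q}$, and that the resulting torsion has exactly the required form with $T_{\xi_s}\in(sp(n)\oplus sp(1))^\perp$. The main obstacle, and the place where all the real work lives, is the $Sp(n)Sp(1)$-representation-theoretic bookkeeping in both the uniqueness and the existence steps: one must show that the decomposition of $\Lambda^2H^*\otimes H$ (equivalently, of the torsion space) matches the freedom left after imposing metric compatibility, so that conditions (1)--(3) neither over- nor under-determine $\nabla$. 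This dimension count is exactly what breaks down for $n=1$ (dimension seven), where the Reeb normalization and the torsion type no longer pin everything down, and that failure is precisely what makes the seven-dimensional analysis of the present paper necessary.
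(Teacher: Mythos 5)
You should first be aware that the paper contains no proof of this statement: Theorem~\ref{Biq} is quoted from Biquard \cite{Biq1}, with Duchemin \cite{D} supplying the seven-dimensional complement, so your proposal can only be measured against the known argument, not against anything internal to the paper. Measured that way, your skeleton (fix $V$ via Reeb fields, uniqueness by a difference-tensor argument, existence by an explicit construction plus $Sp(n)Sp(1)$-bookkeeping) is the right one, and your uniqueness step for $\nabla$ given $V$ is essentially correct: equal prescribed horizontal torsion together with $\nabla g=0$ kills the difference tensor on $H\times H$ by the standard cyclic argument, while for vertical $\xi$ the difference $A_\xi=\nabla_\xi-\nabla'_\xi$ is $g$-skew and normalizes $span\{I_1,I_2,I_3\}$, hence lies in $sp(n)\oplus sp(1)$, yet equals $T_\xi-T'_\xi\in(sp(n)\oplus sp(1))^\perp$ and so vanishes.

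The genuine gap is where you locate the hypothesis $n\geq 2$. The system defining the Reeb fields is not a square linear system whose unique solvability requires $n\geq2$: writing $\xi_s=\tilde\xi_s+X_s$ with $X_s$ horizontal and $\tilde\xi_s$ any transversal frame normalized by $\eta_s(\tilde\xi_t)=\delta_{st}$, the diagonal conditions $(\xi_s\lrcorner d\eta_s)_{|H}=0$ read $2g(I_sX_s,\cdot)=-d\eta_s(\tilde\xi_s,\cdot)_{|H}$ and already determine each $X_s$ uniquely \emph{in every dimension}, because $I_s$ is invertible. The off-diagonal conditions $(\xi_s\lrcorner d\eta_t)_{|H}=-(\xi_t\lrcorner d\eta_s)_{|H}$ are then $12n$ further constraints on an already rigid candidate: the system is overdetermined, and the actual content of Biquard's existence theorem is that these constraints hold automatically when $n\geq2$, by a representation-theoretic analysis of the differential consequences of $d\eta_s(X,Y)=2g(I_sX,Y)$ --- and precisely this can fail when $n=1$, which is why Duchemin must \emph{assume} \eqref{Reeb} in dimension seven, as the paper itself records. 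Your proposal inverts this picture twice: you claim the linear system's unique solvability needs $n\geq2$, and you assert that at $n=1$ ``the Reeb normalization and the torsion type no longer pin everything down,'' whereas in fact uniqueness of both $V$ and $\nabla$ survives at $n=1$ and only the \emph{existence} of a compatible $V$ breaks; relatedly, the injectivity-for-$n\geq2$ argument you sketch inside the uniqueness step is not needed there. Finally, the representation-theoretic verification that your Koszul-type connection has torsion of the required type --- the place you yourself call ``where all the real work lives'' --- is described but never carried out, so as it stands the proposal is an outline of Biquard's strategy rather than a proof.
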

Throughout this paper we shall denote by $\nabla$ only the Biquard connection.
Note that in the condition $(2)$ of Theorem~\ref{Biq} the inner product $<\cdot,\cdot>$ of the endomorphisms on $H$ is given by 
$$<\Phi,\Psi>:=\sum_{a=1}^{4n}g(\Phi(e_a),\Psi(e_a)),\quad \Phi, \Psi\in End(H).$$ 
In \cite{Biq1} Biquard explicitly describes the supplementary subspace $V$ (the \emph{vertical space}) on the QC-manifolds of dimension bigger than seven. Namely, $V$ is locally generated by the three vector fields $\xi_1,\xi_2$ and $\xi_3$ (called \emph{Reeb vector fields}), i.e. $V=span\{\xi_1,\xi_2,\xi_3\},$ satisfying the conditions:
\begin{equation}\label{Reeb}
\eta_s(\xi_t)=\delta_{st},\qquad (\xi_s\lrcorner d\eta_t)_{|H}=-(\xi_t\lrcorner d\eta_s)_{|H},\qquad (\xi_s\lrcorner d\eta_s)_{|H}=0,
\end{equation}
where $\lrcorner$ means the interior multiplication of a vector field and a differential form. 

In the seven dimensional case the Biquard's theorem is not always true. However, Duchemin \cite{D} shows that if we assume the existence of the Reeb vector fields, satisfying the conditions \eqref{Reeb}, then Theorem~\ref{Biq} holds. Because of this, throughout this paper we shall assume that a QC structure in the 7D case satisfies the conditions \eqref{Reeb}. 

The Riemannian metric $g$ on $H$ can be extended to a metric on the entire $TM$ (i.e. to a Riemannian  metric on $M$) by the requirements $H\perp V$ and $g(\xi_s,\xi_t)=\delta_{st}.$ Note that the extended metric (which we shall again denote by $g$) is invariant under the rotations in $V,$ i.e. the action of the group $SO(3)$ on $V,$ and of course is parallel with respect to $\nabla$, $\nabla g=0.$

The \emph{fundamental $2$-forms} $\omega_s$ of the quaternionic structure $(\mathbb{Q},g)$ on $H$ are defined in a standard way by $$\omega_s(X,Y):=g(I_sX,Y),\quad s=1,2,3,$$ and can be extended to $2$-forms on $M$ by requirement $\xi\lrcorner\omega_s=0,\quad \xi\in\Gamma(V).$

The covariant derivatives of the quaternionic structure  and the Reeb vector fields with respect to the Biquard connection are given by
\begin{equation}\label{xider}
\nabla I_i=-\alpha_j\otimes I_k+\alpha_k\otimes I_j,\quad
\nabla\xi_i=-\alpha_j\otimes\xi_k+\alpha_k\otimes\xi_j,
\end{equation}
where $\alpha_s, s=1,2,3,$ are the $sp(1)$-connection $1$-forms of the Biquard connection.

The orthonormal frame $\{e_1,e_2=I_1e_1,e_3=I_2e_1,e_4=I_3e_1,\ldots,e_{4n}=I_3e_{4n-3},\xi_1,\xi_2,\xi_3\}$ of $TM$ is called a \emph{QC-normal frame} at a given point $p\in M$, if the connection $1$-forms of the Biquard connection vanish at $p$.  The existence of a QC-normal frame at any point of $M$ is provided by Lemma 4.5. in \cite{IMV}.
\subsection{Invariant decompositions of the endomorphisms of $H$}\label{decomp}
Any endomorphism $\Psi:H\rightarrow H$ of $H$ can be decomposed in a unique way into four $Sp(n)$-invariant parts with respect to the quaternionic structure $(\mathbb{Q},g)$ as follows:$$\Psi=\Psi^{+++}+\Psi^{+--}+\Psi^{-+-}+\Psi^{--+},$$ 
where $\Psi^{+++}$ commutes with all three $I_i$, $\Psi^{+--}$ commutes with $I_1$ and anti-commutes with the others two, etc. Further, we can regard $\Psi$ as decomposed into two $Sp(n)Sp(1)$-invariant parts with respect to $(\mathbb{Q},g)$, $\Psi=\Psi_{[3]}+\Psi_{[-1]},$ where $\Psi_{[3]}=\Psi^{+++}, \Psi_{[-1]}=\Psi^{+--}+\Psi^{-+-}+\Psi^{--+}.$ Note that in the above decomposition the lower indices $[3]$ and $[-1]$ arise from the fact that $\Psi_{[3]}$ and $\Psi_{[-1]}$ appear the projections of $\Psi$ on the eigenspaces of the Casimir operator
\begin{equation*}
\Upsilon =\ I_1\otimes I_1\ +\ I_2\otimes I_2\ +\ I_3\otimes I_3,
\end{equation*}
corresponding, respectively, to the eigenvalues $3$ and $-1$, see \cite{CSal}. 

In the case $n=1$ an important fact is that the space of the symmetric endomorphisms of $H$, commuting with all three almost complex structures $I_s$, is one-dimensional. Consequently, the $[3]$-component $\Psi_{[3]}$ of any symmetric endomorphism $\Psi$ of $H$ is proportional to the identity operator $Id_{|H}$ of $H,$  explicitly, $\Psi_{[3]}=-\frac{tr\Psi}{4}Id_{|H}.$
\subsection{The torsion and the curvature of Biquard connection}
The torsion tensor $T$ of Biquard connection is defined as usually by $$T(A,B)=\nabla_AB-\nabla_BA-[A,B].$$ The corresponding tensor of type $(0,3)$ via the metric $g$ is obtained in a standard way and is denoted by the same letter, $T(A,B,C)=g(T(A,B),C).$ The restriction of the torsion to the horizontal space  $H$ has the expression $$T(X,Y)=-[X,Y]_{|V}=2\sum_{s=1}^3\omega_s(X,Y)\xi_s,$$ see \cite{IV2}. For an arbitrary but fixed vertical vector field $\xi\in\Gamma(V)$ one obtains an endomorphism $T_\xi$ on $H,$
defined by
$$T_\xi(\cdot):=T(\xi,\cdot)_{|H}:H\rightarrow H.$$ The torsion endomorphism $T_\xi$ is completely trace-free \cite{Biq1}, i.e. $trT_\xi=tr(T_\xi\circ I_s)=0,$ or explicitly 
\begin{equation}\label{trtor}
T(\xi,e_a,e_a)=T(\xi,e_a,I_se_a)=0.
\end{equation}
We shall need the identities
\begin{equation}\label{trtor1}
T(\xi_i,\xi_k,\xi_i)=T(\xi_i,\xi_j,\xi_i)=0,
\end{equation}
see e.g. \cite[(4.34)]{IV2}.
The torsion endomorphism $T_{\xi}$ can be decomposed in a standard way into a symmetric $T_\xi^0$ and a skew-symmetric  $b_\xi$ parts, $T_\xi=T_\xi^0+b_\xi,$ and the symmetric part enjoys the properties
\begin{equation}
\begin{aligned}
T_{\xi _{i}}^{0}I_{i}=-I_{i}T_{\xi _{i}}^{0},\quad I_{2}(T_{\xi
_{2}}^{0})^{+--}=I_{1}(T_{\xi _{1}}^{0})^{-+-},\\\quad I_{3}(T_{\xi
_{3}}^{0})^{-+-}=I_{2}(T_{\xi _{2}}^{0})^{--+},\quad I_{1}(T_{\xi
_{1}}^{0})^{--+}=I_{3}(T_{\xi _{3}}^{0})^{+--}.
\end{aligned}
\end{equation}
For a fixed Reeb vector field $\xi_i$ the skew-symmetric part $b_{\xi_{i}}$ of $T_{\xi_{i}}$ can be represented as $b_{\xi_{i}}=I_iU,$ where $U$ is a traceless symmetric endomorphism of $H,$ which commutes with all three almost complex structures $I_s,\quad s=1,2,3.$ As a consequence in the case $n=1$ one obtains that the  tensor $U$ vanishes identically, $U=0,$ (see the end of Subsection~\ref{decomp}) and the torsion endomorphism $T_\xi$ is a symmetric tensor, $T_\xi=T_\xi^0.$

Ivanov et al. have  introduced \cite{IMV} the two $Sp(n)Sp(1)$-invariant symmetric and traceless tensors $T^0$ and $U$ on $H,$ defined by 
\begin{equation}  \label{Tcompnts}
T^0(X,Y)= g((T_{\xi_1}^{0}I_1+T_{\xi_2}^{0}I_2+T_{ \xi_3}^{0}I_3)X,Y) \
\text{ and }\ U(X,Y) =g(UX,Y).
\end{equation}
These tensors satisfy the equalities
\begin{equation}  \label{propt}
\begin{aligned} T^0(X,Y)+T^0(I_1X,I_1Y)+T^0(I_2X,I_2Y)+T^0(I_3X,I_3Y)=0, \\
U(X,Y)=U(I_1X,I_1Y)=U(I_2X,I_2Y)=U(I_3X,I_3Y). \end{aligned}
\end{equation}
The symmetric part $T_{\xi_s}^0$ of $T_{\xi_s}$ enjoys the property \cite[Proposition~2.3]{IV}
\begin{equation}  \label{need}
4T^0(\xi_s,I_sX,Y)=T^0(X,Y)-T^0(I_sX,I_sY),
\end{equation}
where as usually $T^0(\xi,X,Y)=g(T^0(\xi,X),Y) \Big(=g(T_{\xi}^0(X),Y)\Big).$
As a corollary of \eqref{propt} and \eqref{need} we obtain the equality
\begin{multline}  \label{need1}
T(\xi_s,I_sX,Y)=T^0(\xi_s,I_sX,Y)+g(I_sUI_sX,Y) \\
=\frac14\Big[T^0(X,Y)-T^0(I_sX,I_sY)\Big]-U(X,Y).
\end{multline}
As a consequence of \eqref{propt} and \eqref{need1} we get
\begin{equation}\label{tornab}
\sum_{s=1}^3T(\xi_s,I_sX,Y)=T^0(X,Y)-3U(X,Y).
\end{equation}

The curvature tensor $R$ of Biquard connection is defined in a standard way by 
$$R(A,B,C)=\nabla_A\nabla_BC-\nabla_B\nabla_AC-\nabla_{[A,B]}C.$$
The corresponding tensor of type $(0,4)$ with respect to the metric $g$ is denoted by the same letter,
$R(A,B,C,D):=g(R(A,B,C),D).$ 

There are several tensors, arising from the curvature tensor, which play crucial role in the QC geometry. {The
\emph{QC-Ricci tensor} $Ric$, the \emph{QC-scalar curvature} $Scal$, the \emph{normalized QC-scalar curvature} $S$, the \emph{QC-Ricci forms} $\rho_s$ and the \emph{Ricci-type tensors} $\zeta_s$ of the
Biquard connection are defined, respectively, by the following formulas.
\begin{equation}  \label{qscs}
\begin{aligned} & Ric(A,B)=R(e_b,A,B,e_b),\quad Scal=R(e_b,e_a,e_a,e_b),\quad
8n(n+2)S=Scal,\\ & \rho_s(A,B)=\frac1{4n}R(A,B,e_a,I_se_a),
\quad \zeta_s(A,B)=\frac1{4n}R(e_a,A,B,I_se_a). \end{aligned}
\end{equation}
}
Some significant relations between the upper objects and the torsion tensors are established in \cite{IMV} (see also \cite{IMV1,IV}). Namely, the next formulas hold.
\begin{equation}  \label{sixtyfour}
\begin{aligned} & Ric(X,Y) =(2n+2)T^0(X,Y)+(4n+10)U(X,Y)+2(n+2)Sg(X,Y),\\
&\zeta_s(X,I_sY)=\frac{2n+1}{4n}T^0(X,Y)+\frac1{4n}T^0(I_sX,I_sY)+%
\frac{2n+1}{2n}U(X,Y)+\frac{S}2g(X,Y), \\ &
T(\xi_{i},\xi_{j})=-S\xi_{k}-[\xi_{i},\xi_{j}]_{|H}, \qquad S =
-g(T(\xi_1,\xi_2),\xi_3),\\ &
g(T(\xi_i,\xi_j),X)=-\rho_k(I_iX,\xi_i)=-\rho_k(I_jX,\xi_j)=-g([\xi_i,%
\xi_j],X). \end{aligned}
\end{equation}
In the seven dimensional case ($n=1$) the upper formulas are valid with $U=0.$

An important class of QC structures consists of the \emph{QC-Einstein structures}, defined as follows.
\begin{dfn}A QC structure is called QC-Einstein, if the horizontal restriction of the QC-Ricci tensor is proportional to the metric, i.e.  
\begin{equation}\label{qcein}
Ric(X,Y)=2(n+2)Sg(X,Y).
\end{equation}
\end{dfn}
A manifold endowed with a QC-Einstein structure is called \emph{QC-Einstein manifold}. The first equality in \eqref{sixtyfour} implies that the QC-Einstein condition (the equation \eqref{qcein}) is equivalent to the vanishing of the torsion endomorphism, i.e. $T^0=U=0.$ An established in \cite{IMV} result asserts, that a QC-Einstein structure of dimension bigger than seven has constant QC-scalar curvature, and the vertical distribution is integrable. The corresponding result in the seven-dimensional case is recently established in \cite{IMV3}.

Note that the vanishing of the horizontal restriction of the $sp(n)$-connection $1$-forms $\alpha_s, s=1,2,3,$ implies the vanishing of the torsion endomorphism $T_\xi$ of the Biquard connection, see \cite{IMV}.

Examples of QC-Einstein manifolds are the $3$-Sasakian manifolds, since they have zero torsion endomorphism. The converse is also true in a local sense, namely, any QC-Einstein manifold with positive QC-scalar curvature is locally $3$-Sasakian \cite{IMV} (see \cite{IV1} for the case of negative QC-scalar curvature).
\subsection{The horizontal divergence theorem and the sub-Laplacian}
On a QC manifold $(M,g,\mathbb{Q})$ of dimension $4n+3$ the \emph{horizontal divergence} of a horizontal $1$-form (or a horizontal vector field) $\omega\in\Lambda^1(H)$ is defined by $$\nabla^*\omega=-tr|_{H}\nabla\omega=-\nabla\omega(e_a,e_a).$$ 
If $\eta=(\eta_1,\eta_2,\eta_3)$ is a fixed local contact form of the QC manifold then for an arbitrary  $s\in\{1,2,3\}$ the form $Vol_\eta=\eta_1\wedge\eta_2\wedge\eta_3\wedge\omega_s^{2n}$ is locally defined volume form, which is independent of the choice of $s$ and the local $1$-forms $\eta_1,\eta_2$ and $\eta_3.$ Consequently, $Vol_\eta$ is globally defined volume form on $(M,g,\mathbb{Q}).$ If the QC manifold is compact, the integration by parts over $M$ is possible due to the next divergence formula:
\begin{equation*}  \label{div}
\int_M (\nabla^*\omega)\,\, Vol_{\eta}\ =\ 0,
\end{equation*}
see \cite{IMV}, \cite{Wei}.

For a smooth function $f$ on $M$ the \emph{horizontal Hessian} $\nabla^2f(\cdot,\cdot):\Gamma(H)\times\Gamma(H)\rightarrow\Lambda^0(M)$ and the \emph{sub-Laplacian} $\Delta f\in\Lambda^0(M)$ are defined in a standard way  by $$\nabla^2f(X,Y)=(\nabla_Xdf)(Y)\quad\textit{and}\quad\Delta f=\nabla^*df=-\nabla^2f(e_a,e_a).$$ By definition, the \emph{horizontal gradient} of $f$ is the vector field $\nabla f,$ s.t. $$g(\nabla f,X)=df(X),\quad X\in\Gamma (H).$$

Any (non-zero) smooth function $f$ satisfying the equation $\Delta f=\lambda f$ for some constant $\lambda$ is called \emph{eigenfunction}, corresponding to the \emph{eigenvalue} $\lambda$ of $\Delta.$ In the case of compact $M$ the last equation and the divergence formula yield the non-negativity of the spectrum of the sub-Laplacian.
\section{Some basic identities}
In this section we list some identities which we shall use in the proof of the main results. 
We shall need the next \emph{Ricci identities} \cite{IMV,IV2}
\begin{equation}  \label{boh2}
\begin{aligned} & \nabla^2f
(X,Y)-\nabla^2f(Y,X)=-2\sum_{s=1}^3\omega_s(X,Y)df(\xi_s),\\ & \nabla^2f
(X,\xi_s)-\nabla^2f(\xi_s,X)=T(\xi_s,X,\nabla f),\\ & \nabla ^{3}f(\xi _{i},X,Y) =\nabla
^{3}f(X,Y,\xi _{i})-\nabla ^{2}f\left( T\left( \xi _{i},X\right) ,Y\right)
-\nabla ^{2}f\left( X,T\left( \xi _{i},Y\right) \right) \\
&\hskip3in-df\left( \left( \nabla _{X}T\right) \left( \xi _{i},Y\right)
\right) -R(\xi _{i},X,Y,\nabla f) . \end{aligned}
\end{equation}
As a consequence of the first identity in \eqref{boh2} we get 
\begin{equation}  \label{xi1}
g(\nabla^2f , \omega_s) =\nabla^2f(e_a,I_se_a)=-4ndf(\xi_s).
\end{equation}

The next basic formula that we shall use is a representation of the curvature tensor \cite{IV,IV2}
\begin{multline}  \label{d3n5}
R(\xi _{i},X,Y,Z)=-(\nabla _{X}U)(I_{i}Y,Z)+\omega _{j}(X,Y)\rho
_{k}(I_{i}Z,\xi _{i})
-\omega _{k}(X,Y)\rho _{j}(I_{i}Z,\xi _{i})\\-\omega _{j}(X,Z)\rho _{k}(I_{i}Y,\xi _{i})
+\omega _{k}(X,Z)\rho _{j}(I_{i}Y,\xi _{i})-\omega _{j}(Y,Z)\rho
_{k}(I_{i}X,\xi _{i}) +\omega _{k}(Y,Z)\rho _{j}(I_{i}X,\xi _{i})\\-\frac{1}{4}\Big[(\nabla
_{Y}T^{0})(I_{i}Z,X)+(\nabla _{Y}T^{0})(Z,I_{i}X)\Big]
+\frac{1}{4}\Big[(\nabla _{Z}T^{0})(I_{i}Y,X)+(\nabla _{Z}T^{0})(Y,I_{i}X)%
\Big],
\end{multline}%
where the \emph{Ricci $2$-forms} are given by (see \cite{IV} or \cite{IV2})
\begin{equation}\label{d3n6}
\begin{aligned}
&6(2n+1)\rho_s(\xi_s,X)=(2n+1)X(S)+\frac12
(\nabla_{e_a}T^0)[(e_a,X)-3(I_se_a,I_sX)]\\&\hskip4.3in
-2(\nabla_{e_a}U)(e_a,X),\\ 
&6(2n+1)\rho_i(\xi_j,I_kX)=-6(2n+1)\rho_i(\xi_k,I_jX)=(2n-1)(2n+1)X(S)\\&\hskip0.5in-\frac{4n+1}{2}(
\nabla_{e_a}T^0)(e_a,X)-\frac{3}{2}(\nabla_{e_a}T^0)(I_ie_a,I_iX)-4(n+1)(\nabla_{e_a}U)(e_a,X)
.
\end{aligned}
\end{equation}

By the well-known formula for the relation between two metric connections, we obtain the next one in  the case of the Biquard connection $\nabla$ and the Levi-Civita connection $\nabla^g$ of the extended Riemannian metric $g$:
\begin{equation}\label{diferfor}
g(\nabla_AB,C)-g(\nabla_A^gB,C)=\frac12\Big(T(A,B,C)-T(B,C,A)+T(C,A,B)\Big).
\end{equation}

\section{Proof of Theorem~\ref{Main}}
Let $\lambda$ is the first (non-zero) eigenvalue of the sub-Laplacian and $f$ is a smooth function on $M$ that satisfies the equalities 
\begin{equation}\label{eigenf}
\Delta f=\lambda f\quad\textit{and}\quad\int_Mf^2\,Vol_\eta=1.
\end{equation}
Note that the second equality in \eqref{eigenf} can be always obtained by a suitable constant rescaling of $f$. The proof of Theorem~\ref{Main} lies on a sequence of lemmas, which we shall formulate and prove here. We start with the next
\begin{lemma}\label{lemma1}Let $(M,g,\mathbb{Q})$ be a compact quaternionic contact manifold of dimension seven. Then the next integral inequality holds true
\begin{equation}\label{mainineq}
\int_M\Big[Ric(\nabla f,\nabla f)-2T^0(\nabla f,\nabla f)-\frac{3}{4}\lambda|\nabla f|^2-12\sum_{s=1}^3\Big(df(\xi_s)\Big)^2\Big]Vol_\eta\leq 0.
\end{equation}
\end{lemma}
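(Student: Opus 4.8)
The plan is to run a horizontal Bochner--Weitzenb\"och argument for the sub-Laplacian and to close the estimate with a Cauchy--Schwarz bound on the horizontal Hessian. Working at a point in a QC-normal frame (so that all connection $1$-forms, and hence $\nabla\omega_s$ and $\nabla\xi_s$, vanish there) and differentiating $|\nabla f|^2=g(\nabla f,\nabla f)$ twice, I would first establish the pointwise identity
\[
-\tfrac12\Delta|\nabla f|^2=|\nabla^2 f|^2+\nabla^3 f(e_a,e_a,e_b)\,df(e_b),
\]
with the repeated-index summation of Convention~\ref{conven}. Integrating over the compact $M$ and invoking the horizontal divergence formula annihilates the left-hand side, leaving $\int_M|\nabla^2 f|^2\,Vol_\eta=-\int_M\nabla^3 f(e_a,e_a,e_b)\,df(e_b)\,Vol_\eta$.

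The heart of the matter is to rewrite the third-order term by commuting covariant derivatives twice. The first Ricci identity in \eqref{boh2} moves $\nabla^3 f(e_a,e_a,e_b)$ to $\nabla^3 f(e_a,e_b,e_a)$ at the cost of the vertical contribution $2\sum_s\nabla^2 f(I_s\nabla f,\xi_s)$; the curvature commutation for $df$, in which the torsion $T(e_a,e_b)=2\sum_s\omega_s(e_a,e_b)\xi_s$ enters the vertical slot, then moves $\nabla^3 f(e_a,e_b,e_a)$ to $\nabla^3 f(e_b,e_a,e_a)=-\nabla_{e_b}\Delta f$. Contracting with $df(e_b)$ and using $\Delta f=\lambda f$ turns the leading piece into $-\lambda|\nabla f|^2$; the curvature contraction collapses, by the skew-symmetry of $R$ in its last pair together with the definition of $Ric$ in \eqref{qscs}, to $\sum_{a,b}R(e_a,e_b,e_a,\nabla f)\,df(e_b)=-Ric(\nabla f,\nabla f)$; and the two vertical Hessian terms combine, via the second Ricci identity in \eqref{boh2} and the torsion identity \eqref{tornab} (recall $U=0$ in dimension seven), into $4\sum_s\nabla^2 f(\xi_s,I_s\nabla f)+2T^0(\nabla f,\nabla f)$. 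Thus
\[
\begin{aligned}
\int_M|\nabla^2 f|^2\,Vol_\eta=\int_M\Big[&\lambda|\nabla f|^2-Ric(\nabla f,\nabla f)\\
&-2T^0(\nabla f,\nabla f)-4\sum_{s=1}^3\nabla^2 f(\xi_s,I_s\nabla f)\Big]Vol_\eta.
\end{aligned}
\]

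It remains to evaluate the residual vertical integral and to bound $|\nabla^2 f|^2$ from below. For the former I would integrate by parts once more and feed in the third Ricci identity of \eqref{boh2} together with the curvature representation \eqref{d3n5} and the Ricci-form formulas \eqref{d3n6}; the expected outcome is
\[
\int_M\sum_{s=1}^3\nabla^2 f(\xi_s,I_s\nabla f)\,Vol_\eta=-\int_M\Big[T^0(\nabla f,\nabla f)+4\sum_{s=1}^3\big(df(\xi_s)\big)^2\Big]Vol_\eta,
\]
which upgrades the previous identity to $\int_M|\nabla^2 f|^2=\int_M[\lambda|\nabla f|^2-Ric(\nabla f,\nabla f)+2T^0(\nabla f,\nabla f)+16\sum_s(df(\xi_s))^2]$. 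For the lower bound I split the horizontal Hessian into symmetric and skew parts: the first Ricci identity in \eqref{boh2} forces the skew part to equal $-\sum_s\omega_s(\cdot,\cdot)df(\xi_s)$, whose squared norm is exactly $4n\sum_s(df(\xi_s))^2=4\sum_s(df(\xi_s))^2$, while Cauchy--Schwarz on the symmetric part (of trace $-\Delta f$) gives $|(\nabla^2 f)_{sym}|^2\ge(\Delta f)^2/(4n)=(\Delta f)^2/4$. Hence $|\nabla^2 f|^2\ge\tfrac14(\Delta f)^2+4\sum_s(df(\xi_s))^2$ pointwise. Integrating, using $\int_M(\Delta f)^2=\lambda\int_M|\nabla f|^2$, and substituting into the Bochner identity rearranges precisely to \eqref{mainineq}.

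The main obstacle is the middle computation: after the double commutation one must show that the residual vertical integral $\int_M\sum_s\nabla^2 f(\xi_s,I_s\nabla f)$ produces exactly $-T^0(\nabla f,\nabla f)$ and the precise coefficient $-4$ in front of $\sum_s(df(\xi_s))^2$. This is delicate because the vertical derivative $\nabla_{\xi_s}$ does not integrate by parts as cleanly as the horizontal divergence, the $sp(1)$-connection forms $\alpha_s$ intervene, and the seven-dimensional specifics ($U=0$, the symmetry relations for $T^0_{\xi_s}$ behind \eqref{need}--\eqref{need1}) must be used throughout; any slip in these constants would corrupt both the sharp coefficient $\tfrac34\lambda$ and the factor $-12$ in \eqref{mainineq}.
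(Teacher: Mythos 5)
Your proposal is correct and reaches \eqref{mainineq} with the right constants, and its skeleton is the same as the paper's: a Bochner identity, evaluation of the mixed vertical term $\int_M\sum_{s=1}^3\nabla^2f(\xi_s,I_s\nabla f)\,Vol_\eta$, a pointwise lower bound for $|\nabla^2f|^2$, and the identity $\int_M(\Delta f)^2\,Vol_\eta=\lambda\int_M|\nabla f|^2\,Vol_\eta$. The genuine differences are in how you obtain the ingredients, which the paper mostly imports from earlier work. Where the paper quotes the Bochner formula \eqref{bohh} from \cite{IPV1}, you re-derive it by double commutation of covariant derivatives; your intermediate identity $\int_M|\nabla^2f|^2=\int_M\big[\lambda|\nabla f|^2-Ric(\nabla f,\nabla f)-2T^0(\nabla f,\nabla f)-4\sum_{s=1}^3\nabla^2f(\xi_s,I_s\nabla f)\big]\,Vol_\eta$ is exactly the integrated form of \eqref{bohh}. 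Where the paper cites the $[3]/[-1]$-component inequalities of \cite{IPV1} to get \eqref{hnorm}, you use the more elementary symmetric/skew splitting: the first Ricci identity forces the skew part to be $-\sum_s\omega_s\,df(\xi_s)$, whose norm is computed exactly (not just bounded), and Cauchy--Schwarz on the trace handles the symmetric part; this is a self-contained and arguably cleaner derivation of the same bound. Finally, your ``expected outcome'' for the vertical integral is precisely the paper's \eqref{inteq}, which the paper obtains from \eqref{e:gr3} (quoted from \cite{IPV3}) together with \eqref{intid}. This step is less delicate than you fear, and your proposed toolbox for it is heavier than necessary: the third Ricci identity, \eqref{d3n5} and \eqref{d3n6} are not needed here (they enter only in the later Lemmas~\ref{lema3} and \ref{lema4}). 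It suffices to apply the second Ricci identity, $\nabla^2f(\xi_s,I_s\nabla f)=\nabla^2f(I_s\nabla f,\xi_s)-T(\xi_s,I_s\nabla f,\nabla f)$, sum over $s$ using \eqref{tornab} with $U=0$, and integrate the first term by parts against \eqref{xi1}; since the summed expressions are $Sp(n)Sp(1)$-invariant, the QC-normal frame computation is legitimate, and one lands directly on $-\int_M\big[4\sum_{s=1}^3\big(df(\xi_s)\big)^2+T^0(\nabla f,\nabla f)\big]\,Vol_\eta$. With that identity in place, your final rearrangement is exactly the paper's and yields \eqref{mainineq}.
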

\begin{proof}
Following \cite{LL}, we begin with the Bochner-type formula, established in our previous paper \cite[(3.3)]{IPV1}
\begin{multline}\label{bohh}
-\frac12\triangle |\nabla f|^2=|\bi^2f|^2-g\left (\nabla (\triangle f), \gr \right )+Ric(\nabla f,\nabla f)+2\sum_{s=1}^3T(\xi_s,I_s\gr,\gr)\\+
4\sum_{s=1}^3\bi^2f(\xi_s,I_s\gr).
\end{multline}
As well as in the higher dimensions, this formula is at the root of the proof of the desired estimate. The next basic formula is \cite[(3.3)]{IPV3}
\begin{equation}  \label{e:gr3}
\sum_{s=1}^3\nabla^{2}f(\xi_s,I_sX)=\frac{1}{4n}\sum_{s=1}^3%
\nabla^{3}f(I_sX,I_se_a,e_a)-\sum_{s=1}^3T(\xi_s,I_sX,\nabla f).
\end{equation}
Integrating over $M$ the both sides of \eqref{e:gr3} for $n=1$ and $X=\nabla f$ and using the integral identity
\begin{equation}\label{intid}
\int_M\sum_{s=1}^3\nabla^3f(I_s\nabla f,I_se_a,e_a)\,Vol_\eta=-16\int_M\sum_{s=1}^3\Big(df(\xi_s)\Big)^2\,Vol_\eta
\end{equation}
and \eqref{tornab}, we obtain
\begin{equation}\label{inteq}
\int_M\sum_{s=1}^3\nabla^2f(\xi_s,I_s\nabla f)\,Vol_\eta=-\int_M\Big[4\sum_{s=1}^3\Big(df(\xi_s)\Big)^2+T^0(\nabla f,\nabla f)\Big]\,Vol_\eta.
\end{equation} 
It should be noted that in the calculations for getting \eqref{intid} we used \eqref{xi1}, an integration by parts and the $Sp(n)Sp(1)-$invariance of the expression  $\sum_{s=1}^3\nabla^3f(I_s\nabla f,I_se_a,e_a),$ which allows us to work in a QC-normal frame.

Further, we take the next inequalities for the $Sp(n)Sp(1)$-invariant parts of the horizontal Hessian, \cite[(4.6) and (4.7)]{IPV1}, 
\begin{equation*}
|(\bi^2f)_{[-1]}|^2 \geq 4n\sum_{s=1}^3\Big(df(\xi_s)\Big)^2,\quad|(\bi^2f)_{[3]}|^2 \geq \frac1{4n}(\triangle f)^2,
\end{equation*}
which in the seven-dimensional case $(n=1)$ give the next inequality for the norm of the horizontal Hessian:
\begin{equation}\label{hnorm}
|\nabla^2f|^2=|(\nabla^2f)_{[-1]}|^2+|(\nabla^2f)_{[3]}|^2\geq 4\sum_{s=1}^3\Big(df(\xi_s)\Big)^2+\frac14(\Delta f)^2.
\end{equation}

Taking into account the divergence formula, we get the integral identity
\begin{equation}  \label{eq444}
\int_M (\triangle f)^2\, Vol_{\eta}=\lambda \int_M |\nabla f|^2 \, Vol_{\eta}.
\end{equation}

Finally, integrating \eqref{bohh} over $M$ and using \eqref{tornab}, \eqref{inteq}, \eqref{hnorm} and  \eqref{eq444}, we obtain \eqref{mainineq}.
\end{proof}
 The next decisive step is to find a convenient estimate of the term $\int_M\sum_{s=1}^3\Big(df(\xi_s)\Big)^2\,Vol_{\eta},$ which appears in \eqref{mainineq}. The aim of the following results is to establish one such estimate.
\begin{lemma}["Vertical Bochner formula"]\label{lema2} Let $\phi$ be a smooth function on a QC manifold  $(M,g,\mathbb{Q})$ of dimension $4n+3.$ Then the next formula holds
\begin{multline}\label{vertBoch}
\sum_{s=1}^3\Delta(\xi_s\phi)^2=2\sum_{s=1}^3\Big[-|\nabla(\xi_s\phi)|^2+d\phi(\xi_s)\xi_s(\Delta \phi)-d\phi(\xi_s)R(\xi_s,e_a,e_a,\nabla \phi)\\-d\phi(\xi_s)(\nabla_{e_a}T)(\xi_s,e_a,\nabla \phi)-2d\phi(\xi_s)g\Big(T_{\xi_s},\nabla^2\phi\Big)\Big].
\end{multline}
\end{lemma}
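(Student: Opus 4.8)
The plan is to reduce the statement to an elementary Leibniz rule for the sub-Laplacian together with a commutation of $\Delta$ with the Reeb fields, the latter being handled by the third Ricci identity in \eqref{boh2}. First I would record that for any smooth function $h$,
\begin{equation*}
\Delta(h^2)=2h\,\Delta h-2|\nabla h|^2,
\end{equation*}
where $\nabla h$ is the horizontal gradient and $|\nabla h|^2=\sum_a(e_ah)^2$; this is immediate from $\nabla^2(h^2)(X,Y)=2\,dh(X)\,dh(Y)+2h\,\nabla^2h(X,Y)$ upon tracing over a horizontal basis. Applying it with $h=\xi_s\phi=d\phi(\xi_s)$ and summing over $s$ reduces \eqref{vertBoch} to computing $\sum_{s=1}^3 d\phi(\xi_s)\,\Delta(\xi_s\phi)$, i.e. to commuting the sub-Laplacian past each $\xi_s$.

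To compute $\Delta(\xi_s\phi)$ I would fix a point $p$ and work in a QC-normal frame there, so that all connection $1$-forms of the Biquard connection vanish at $p$. Writing $d(\xi_s\phi)(X)=\nabla^2\phi(X,\xi_s)+d\phi(\nabla_X\xi_s)$ and differentiating once more, the trace $-\sum_a\nabla^2(\xi_s\phi)(e_a,e_a)$ becomes $-\sum_a\nabla^3\phi(e_a,e_a,\xi_s)$ up to terms carrying a factor $\nabla_{e_a}\xi_s$; by \eqref{xider} these are proportional to the $sp(1)$-forms $\alpha_r$ and their horizontal derivatives, so at $p$ only a residual contribution $V_s$ survives, built from $c_r:=\sum_a e_a(\alpha_r(e_a))$ contracted against the vertical derivatives, schematically $V_s=c_t\,\xi_u\phi-c_u\,\xi_t\phi$. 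Next I would feed $\sum_a\nabla^3\phi(e_a,e_a,\xi_s)$ into the third Ricci identity of \eqref{boh2}, traced over horizontal $X=Y=e_a$ with $f=\phi$, $i=s$. The left side $\sum_a\nabla^3\phi(\xi_s,e_a,e_a)$ equals $\xi_s\big(\sum_a\nabla^2\phi(e_a,e_a)\big)=-\xi_s(\Delta\phi)$ in the normal frame, while the right side produces precisely the curvature term $R(\xi_s,e_a,e_a,\nabla\phi)$, the term $(\nabla_{e_a}T)(\xi_s,e_a,\nabla\phi)$, and the two Hessian--torsion traces $\nabla^2\phi(T(\xi_s,e_a),e_a)+\nabla^2\phi(e_a,T(\xi_s,e_a))$, which I would identify with $2g(T_{\xi_s},\nabla^2\phi)$ using that $T(\xi_s,e_a)$ is horizontal and the symmetry/trace-freeness of the torsion endomorphism recorded around \eqref{trtor}.

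Combining the two computations yields
\begin{equation*}
\Delta(\xi_s\phi)=\xi_s(\Delta\phi)-R(\xi_s,e_a,e_a,\nabla\phi)-(\nabla_{e_a}T)(\xi_s,e_a,\nabla\phi)-2g(T_{\xi_s},\nabla^2\phi)+V_s.
\end{equation*}
The final step is to multiply by $d\phi(\xi_s)$ and sum over the cyclic triples $(s,t,u)$: the weighted sum $\sum_{s=1}^3 d\phi(\xi_s)\,V_s$ telescopes to zero by the cyclic antisymmetry of $V_s=c_t\,\xi_u\phi-c_u\,\xi_t\phi$ in $(s,t,u)$. Conceptually this cancellation is forced, since $\sum_s(\xi_s\phi)^2$ and hence its sub-Laplacian are independent of the $SO(3)$ gauge choice of $\{\xi_1,\xi_2,\xi_3\}$, whereas the $\alpha_r$ are gauge-dependent, so every $\alpha$-term must drop out. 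Inserting the surviving terms into $\sum_{s=1}^3\Delta(\xi_s\phi)^2=\sum_{s=1}^3\big[2(\xi_s\phi)\Delta(\xi_s\phi)-2|\nabla(\xi_s\phi)|^2\big]$ then gives \eqref{vertBoch}.

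I expect the main obstacle to be the bookkeeping in the middle step: tracking the terms generated by $\nabla\xi_s$ when commuting $\Delta$ past $\xi_s$, correctly collapsing the two Hessian--torsion traces into $2g(T_{\xi_s},\nabla^2\phi)$, and verifying that the residual gauge-dependent terms $V_s$ assemble into the cyclically antisymmetric shape that cancels after the weighted summation over $s$. Everything else is a direct application of \eqref{boh2} together with the normal-frame simplifications.
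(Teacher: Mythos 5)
Your proposal is correct and follows essentially the same route as the paper's proof: the Leibniz identity $\Delta(h^2)=2h\,\Delta h-2|\nabla h|^2$ applied to $h=\xi_s\phi$, then the traced third Ricci identity of \eqref{boh2} in a QC-normal frame to convert $-\nabla^3\phi(e_a,e_a,\xi_s)$ into $\xi_s(\Delta\phi)$ minus the curvature term $R(\xi_s,e_a,e_a,\nabla\phi)$, the torsion-derivative term $(\nabla_{e_a}T)(\xi_s,e_a,\nabla\phi)$, and the Hessian--torsion trace $2g(T_{\xi_s},\nabla^2\phi)$. Your explicit bookkeeping of the residual gauge terms $V_s=c_t\,\xi_u\phi-c_u\,\xi_t\phi$ and their cancellation in the sum $\sum_s d\phi(\xi_s)V_s$ is a genuine detail that the paper hides behind its $Sp(n)Sp(1)$-invariance remark and ``standard calculations,'' so your write-up is the same argument, carried out more carefully.
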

\begin{proof}
First, it should be noted, that the tensor $T_{\xi_s}$ that appears in the last term of the right-hand side of \eqref{vertBoch}, is assumed to be the tensor of type $(0,2),$ corresponding to the torsion endmorphism $T_{\xi_s}$ via $g.$ The left hand side of the desired equality \eqref{vertBoch} is an $Sp(n)Sp(1)$-invariant and hence we can do our computations in a QC-normal frame. Using the first and the third Ricci identity in \eqref{boh2} and the properties of the torsion endomorphism, after some standard calculations we have
\begin{multline}
\sum_{s=1}^3\Delta(\xi_s\phi)^2\\=2\sum_{s=1}^3\Big[-|\nabla(\xi_s\phi)|^2+d\phi(\xi_s)\Delta(\xi_s\phi)\Big]=2\sum_{s=1}^3\Big[-|\nabla(\xi_s\phi)|^2-d\phi(\xi_s)\nabla^3\phi(e_a,e_a,\xi_s)\Big]\\=2\sum_{s=1}^3\Big[-|\nabla(\xi_s\phi)|^2-d\phi(\xi_s)\Big(\nabla^3\phi(\xi_s,e_a,e_a)+\nabla^2\phi(T(\xi_s,e_a),e_a)+\nabla^2\phi(e_a,T(\xi_s,e_a))\\+d\phi((\nabla_{e_a}T)(\xi_s,e_a))+R(\xi_s,e_a,e_a,\nabla\phi)\Big)\Big]=2\sum_{s=1}^3\Big[-|\nabla(\xi_s\phi)|^2+d\phi(\xi_s)\xi_s(\Delta \phi)\\-d\phi(\xi_s)R(\xi_s,e_a,e_a,\nabla \phi)-d\phi(\xi_s)(\nabla_{e_a}T)(\xi_s,e_a,\nabla \phi)-2d\phi(\xi_s)g\Big(T_{\xi_s},\nabla^2\phi\Big)\Big],
\end{multline}
which proofs the lemma.
\end{proof}
Applying \eqref{vertBoch} to the case of a seven-dimensional QC manifold and an eigenfunction $f$ on it, we obtain the next
\begin{lemma}\label{lema3} On a QC manifold $(M,g,\mathbb{Q})$ of dimension seven the next formula holds
\begin{multline}\label{sublap}
\sum_{s=1}^3\Delta(\xi_sf)^2=2\sum_{s=1}^3\Big[-|\nabla(\xi_sf)|^2+\lambda\Big(df(\xi_s)\Big)^2-\frac{2}{3}df(\xi_s)dS(I_s\nabla f)\\+\frac{8}{9}df(\xi_s)(\nabla_{e_a}T^0)(e_a,I_s\nabla f)-\frac{2}{3}df(\xi_s)\Big((\nabla_{e_a}T^0)(\xi_u,e_a,I_t\nabla f)-(\nabla_{e_a}T^0)(\xi_t,e_a,I_u\nabla f)\Big)\\-2df(\xi_s)e_a\Big(T(\xi_s,e_a,\nabla f)\Big)\Big].
\end{multline}
\end{lemma}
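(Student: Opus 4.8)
The plan is to specialize the vertical Bochner formula \eqref{vertBoch} of Lemma~\ref{lema2} to the case $\phi=f$, $n=1$, and to rewrite each of the five terms inside its bracket. Throughout I would work in a QC-normal frame at a point, which is legitimate since the left-hand side $\sum_s\Delta(\xi_sf)^2$ is $Sp(n)Sp(1)$-invariant; in such a frame the connection $1$-forms vanish at the point, so covariant derivatives of $e_a,\xi_s,I_s$ drop out and the algebraic torsion identities \eqref{propt}, \eqref{need}, \eqref{need1}, \eqref{tornab} may be differentiated freely. The first two bracket terms are immediate: $-|\nabla(\xi_sf)|^2$ is unchanged, and since $\Delta f=\lambda f$ one has $\xi_s(\Delta f)=\lambda\,df(\xi_s)$, so $df(\xi_s)\xi_s(\Delta f)=\lambda\big(df(\xi_s)\big)^2$. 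The work is in the curvature term and the two torsion terms.

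For the curvature term I would expand $R(\xi_s,e_a,e_a,\nabla f)$ using the representation \eqref{d3n5} with $i=s$, $(j,k)=(t,u)$, $X=Y=e_a$, $Z=\nabla f$ and $U=0$ (valid because $n=1$). The $U$-derivatives vanish, the $\omega_j(e_a,e_a)$-terms vanish, and the trace $\sum_a(\nabla_{\nabla f}T^0)(I_se_a,e_a)$ vanishes by trace-freeness of $T^0$, leaving a $\rho$-part equal to $-2\big[\rho_u(\xi_s,I_u\nabla f)+\rho_t(\xi_s,I_t\nabla f)\big]$ (after using $I_sI_t=I_u$, $I_sI_u=-I_t$ and the antisymmetry of the Ricci $2$-forms) together with a direct divergence $-\tfrac14\big[(\nabla_{e_a}T^0)(e_a,I_s\nabla f)+(\nabla_{e_a}T^0)(I_se_a,\nabla f)\big]$. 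Feeding the $\rho$-part through the Ricci $2$-form identities \eqref{d3n6} (with $n=1$, $U=0$) turns it into a multiple of $dS(I_s\nabla f)$, the divergence $(\nabla_{e_a}T^0)(e_a,I_s\nabla f)$, and the off-diagonal divergences $(\nabla_{e_a}T^0)(I_ue_a,I_t\nabla f)-(\nabla_{e_a}T^0)(I_te_a,I_u\nabla f)$; carrying the constants through the factor $6(2n+1)=18$ yields coefficient $-\tfrac23$ on $dS(I_s\nabla f)$, $\tfrac59$ on the divergence, and $\tfrac16$ on the off-diagonal combination.

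For the two torsion terms I would apply the Leibniz rule in the normal frame, which gives $e_a\big(T(\xi_s,e_a,\nabla f)\big)=(\nabla_{e_a}T)(\xi_s,e_a,\nabla f)+g\big(T_{\xi_s},\nabla^2f\big)$, since $T_{\xi_s}(e_a)$ is horizontal and $g\big(T_{\xi_s}(e_a),\nabla_{e_a}\nabla f\big)=T_{\xi_s}(e_a,e_b)\nabla^2f(e_a,e_b)$. Hence $-df(\xi_s)(\nabla_{e_a}T)(\xi_s,e_a,\nabla f)-2df(\xi_s)g(T_{\xi_s},\nabla^2f)$ equals the target term $-2df(\xi_s)e_a\big(T(\xi_s,e_a,\nabla f)\big)$ plus a leftover $+df(\xi_s)(\nabla_{e_a}T)(\xi_s,e_a,\nabla f)$. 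In dimension seven $T_{\xi_s}=T^0_{\xi_s}$ (because $U=0$ kills $b_{\xi_s}=I_sU$), so inverting \eqref{need} to $T^0(\xi_s,X,Y)=-\tfrac14\big[T^0(I_sX,Y)+T^0(X,I_sY)\big]$ rewrites the leftover as $-\tfrac14 df(\xi_s)\big[(\nabla_{e_a}T^0)(I_se_a,\nabla f)+(\nabla_{e_a}T^0)(e_a,I_s\nabla f)\big]$, which cancels exactly the direct divergence produced by \eqref{d3n5}.

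Reassembling, the surviving content of the bracket is $-\tfrac23 df(\xi_s)dS(I_s\nabla f)$, the divergence with coefficient $\tfrac59$, the off-diagonal combination with coefficient $\tfrac16$, and $-2df(\xi_s)e_a(T(\xi_s,e_a,\nabla f))$. The final repackaging is to convert the off-diagonal divergences back into vertical form by \eqref{need} again, via $(\nabla_{e_a}T^0)(I_ue_a,I_t\nabla f)=-4(\nabla_{e_a}T^0)(\xi_u,e_a,I_t\nabla f)+(\nabla_{e_a}T^0)(e_a,I_s\nabla f)$ and its $(t,u)$-analogue; this produces exactly $-\tfrac23\big[(\nabla_{e_a}T^0)(\xi_u,e_a,I_t\nabla f)-(\nabla_{e_a}T^0)(\xi_t,e_a,I_u\nabla f)\big]$ and donates an extra $\tfrac13(\nabla_{e_a}T^0)(e_a,I_s\nabla f)$, upgrading $\tfrac59$ to $\tfrac89$ and reproducing \eqref{sublap}. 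I expect the main obstacle to be precisely this coefficient bookkeeping: keeping the cyclic conventions $(i,j,k)=(s,t,u)$ together with the signs $I_sI_t=I_u$, $I_uI_t=-I_s$ consistent through the two applications of \eqref{need} and the factor-$18$ denominators in \eqref{d3n6}, so that the direct divergences cancel cleanly and the residual constants land exactly on $-\tfrac23$, $\tfrac89$ and $-\tfrac23$.
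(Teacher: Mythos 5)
Your proposal is correct and follows essentially the same route as the paper: specializing the vertical Bochner formula of Lemma~\ref{lema2} to an eigenfunction in a QC-normal frame, expanding the curvature term via \eqref{d3n5} and the Ricci $2$-form formulas \eqref{d3n6} with $U=0$, and handling the torsion terms through \eqref{need}. The paper records exactly your intermediate identities (its \eqref{nablator}, \eqref{curvf}, \eqref{ricf}) and compresses the rest --- the cancellation of the $\tfrac14$-divergence terms against the curvature contribution and the $\tfrac59+\tfrac13=\tfrac89$ bookkeeping via \eqref{need} --- into the phrase ``standard computations,'' all of which you carried out correctly.
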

\begin{proof}
As in the proof of the previous lemma, we can work in a QC-normal frame. Using the properties of the torsion tensor, listed in Subsection 2.3., we get 
\begin{multline}\label{nablator}
\sum_{s=1}^3df(\xi_s)(\nabla_{e_a}T)(\xi_s,e_a,\nabla f)\\=-\frac{1}{4}\sum_{s=1}^3df(\xi_s)\Big[(\nabla_{e_a}T^0)(\nabla f,I_se_a)+(\nabla_{e_a}T^0)(I_s\nabla f,e_a)\Big].
\end{multline}
Next we use \eqref{d3n5} and the properties of the torsion tensor to obtain 
\begin{multline}\label{curvf}
\sum_{s=1}^3df(\xi_s)R(\xi_s,e_a,e_a,\nabla f)\\=\sum_{s=1}^3df(\xi_s)\Big[-\frac{1}{4}\Big((\nabla_{e_a}T^0)(I_s\nabla f,e_a)+(\nabla_{e_a}T^0)(\nabla f,I_se_a)\Big)\\-2\omega_t(e_a,\nabla f)\rho_u(I_se_a,\xi_s)+2\omega_u(e_a,\nabla f)\rho_t(I_se_a,\xi_s)\Big].
\end{multline}
We use the representations \eqref{d3n6} for the Ricci $2$-forms that appear in \eqref{curvf} to obtain
\begin{equation}\label{ricf}
\begin{aligned}
&\rho_u(I_se_a,\xi_s)=-\frac16dS(I_ue_a)+\frac{5}{36}(\nabla_{e_{b}}T^0)(e_b,I_ue_a)-\frac{1}{12}(\nabla_{e_b}T^0)(I_ue_b,e_a),\\
&\rho_t(I_se_a,\xi_s)=-\frac16dS(I_te_a)+\frac{5}{36}(\nabla_{e_b}T^0)(e_b,I_te_a)-\frac{1}{12}(\nabla_{e_b}T^0)(I_te_b,e_a).
\end{aligned}
\end{equation}
Substituting \eqref{nablator}, \eqref{curvf} and \eqref{ricf} in the right-hand side of \eqref{vertBoch} and using the properties of the torsion tensor, we get \eqref{sublap} after a number of standard computations. 
\end{proof}
The next Lemma gives an integral equality, which is one of the main instruments for obtaining the needed sharp estimate for the term $\int_M\sum_{s=1}^3\Big(df(\xi_s)\Big)^2\,Vol_{\eta},$ that occurs in \eqref{mainineq}.
\begin{lemma}\label{lema4}
On a seven-dimensional compact QC manifold $(M,g,\mathbb{Q})$ the next integral formula holds
\begin{multline}\label{mainint}
\int_M\sum_{s=1}^3|\nabla(\xi_sf)|^2\,Vol_{\eta}=\int_M\sum_{s=1}^3\Big[2|T(\xi_s,\nabla f)|^2+\frac{1}{6}(I_s\nabla f)^2S\\-\frac{2}{9}I_s\nabla f\Big((\nabla_{e_a}T^0)(e_a,I_s\nabla f)\Big)+\frac{1}{6}I_s\nabla f\Big((\nabla_{e_a}T)(\xi_u,e_a,I_t\nabla f)\Big)\\-\frac{1}{6}I_s\nabla f\Big((\nabla_{e_a}T)(\xi_t,e_a,I_u\nabla f)\Big)-(\nabla_{\xi_s}T)(\xi_s,\nabla f,\nabla f)+\lambda\Big(df(\xi_s)\Big)^2\Big]\,Vol_{\eta}.
\end{multline} 
\end{lemma}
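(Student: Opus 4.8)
The plan is to integrate the pointwise identity \eqref{sublap} of Lemma~\ref{lema3} over the compact manifold $M$ against $Vol_\eta$. Since $\sum_{s=1}^3\Delta(\xi_sf)^2$ is a sum of sub-Laplacians and the sub-Laplacian is a horizontal divergence, $\Delta\psi=\nabla^*d\psi$, the divergence formula forces the left-hand side to vanish, $\int_M\sum_{s=1}^3\Delta(\xi_sf)^2\,Vol_\eta=0$. Cancelling the overall factor $2$ and moving the $-|\nabla(\xi_sf)|^2$ contribution to the left while keeping $\lambda\big(df(\xi_s)\big)^2$ on the right, I am left with the task of transforming the four remaining families of terms of \eqref{sublap} -- the $dS$-term, the two $(\nabla_{e_a}T^0)$-terms, and the divergence-type term $-2df(\xi_s)e_a\big(T(\xi_s,e_a,\nabla f)\big)$ -- into the right-hand side of \eqref{mainint}.

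The main device is a single integration-by-parts identity. By \eqref{xi1} one has $\nabla^2f(e_a,I_se_a)=-4df(\xi_s)$ in dimension seven, and a direct computation in a QC-normal frame shows that this left-hand side is exactly the horizontal codifferential $\nabla^*\omega^s$ of the $1$-form $\omega^s:=g(I_s\nabla f,\cdot)$. Hence $-4df(\xi_s)=\nabla^*\omega^s$, and applying the divergence formula to $\Phi\,\omega^s$ yields, for every smooth function $\Phi$,
\begin{equation*}
\int_M df(\xi_s)\,\Phi\,Vol_\eta=-\frac14\int_M (I_s\nabla f)(\Phi)\,Vol_\eta.
\end{equation*}
Choosing $\Phi=(I_s\nabla f)(S)$ converts the $dS$-term (coefficient $-\tfrac23$) into $\tfrac16(I_s\nabla f)^2S$; choosing $\Phi=(\nabla_{e_a}T^0)(e_a,I_s\nabla f)$ turns the coefficient $\tfrac89$ into $-\tfrac29$ and produces $-\tfrac29 I_s\nabla f\big((\nabla_{e_a}T^0)(e_a,I_s\nabla f)\big)$; and choosing $\Phi$ equal to the bracket $(\nabla_{e_a}T^0)(\xi_u,e_a,I_t\nabla f)-(\nabla_{e_a}T^0)(\xi_t,e_a,I_u\nabla f)$ produces the two $\tfrac16$-terms. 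Because $U=0$ in dimension seven, one has $T_{\xi}=T_{\xi}^0$, so $T^0$ may be replaced by the full torsion $T$ on these slots, matching verbatim the terms $+\tfrac16 I_s\nabla f\big((\nabla_{e_a}T)(\xi_u,e_a,I_t\nabla f)\big)$ and $-\tfrac16 I_s\nabla f\big((\nabla_{e_a}T)(\xi_t,e_a,I_u\nabla f)\big)$ of \eqref{mainint}.

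It remains to treat $-2df(\xi_s)e_a\big(T(\xi_s,e_a,\nabla f)\big)$. Writing $\Theta_s(X):=T(\xi_s,X,\nabla f)$, the sum $\sum_a e_a\big(T(\xi_s,e_a,\nabla f)\big)$ equals $-\nabla^*\Theta_s$, so the divergence formula moves the derivative onto $df(\xi_s)$ and produces $2\int_M T(\xi_s,e_a,\nabla f)\,e_a(\xi_sf)\,Vol_\eta$. Using $e_a(\xi_sf)=\nabla^2f(e_a,\xi_s)$ in a QC-normal frame, the second Ricci identity of \eqref{boh2} in the form $\nabla^2f(e_a,\xi_s)=\nabla^2f(\xi_s,e_a)+T(\xi_s,e_a,\nabla f)$, and the symmetry of $T_{\xi_s}$, this splits into the term $2|T(\xi_s,\nabla f)|^2$ plus $2T(\xi_s,\nabla_{\xi_s}\nabla f,\nabla f)$. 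The Leibniz rule for $\nabla_{\xi_s}$ applied to the scalar $T(\xi_s,\nabla f,\nabla f)$, using $\nabla_{\xi_s}\xi_s=0$ in the normal frame and again the symmetry of $T_{\xi_s}$, gives
\begin{equation*}
2T(\xi_s,\nabla_{\xi_s}\nabla f,\nabla f)=\xi_s\big(T(\xi_s,\nabla f,\nabla f)\big)-(\nabla_{\xi_s}T)(\xi_s,\nabla f,\nabla f).
\end{equation*}

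The hard part is the vanishing of the remaining ``vertical divergence'' $\int_M\sum_{s=1}^3\xi_s\big(T(\xi_s,\nabla f,\nabla f)\big)\,Vol_\eta=0$, which is precisely what is needed to leave only the term $-(\nabla_{\xi_s}T)(\xi_s,\nabla f,\nabla f)$ and thereby close \eqref{mainint}. This is not a consequence of the horizontal divergence formula, and I would obtain it from the divergence theorem for the extended Riemannian metric $g$ (with $Vol_\eta$ proportional to the Riemannian volume), writing $\sum_s\xi_s(h_s)=\sum_s\mathrm{div}^g(h_s\xi_s)-\sum_s h_s\,\mathrm{div}^g\xi_s$ with $h_s=T(\xi_s,\nabla f,\nabla f)$. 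Computing $\mathrm{div}^g\xi_s$ through the connection difference formula \eqref{diferfor}, the horizontal contribution drops out by the trace-free property \eqref{trtor} and the antisymmetry of $T$, while the vertical contribution reduces to the $sp(1)$-connection terms via \eqref{xider} and \eqref{trtor1}; I expect the cyclic sum over $s$ against $h_s$ to cancel, killing $\sum_s h_s\,\mathrm{div}^g\xi_s$. This step, together with the tracking of the $\tfrac89,\tfrac29,\tfrac16$ coefficients and of the vertical-to-horizontal conversion, is where essentially all of the work lies; once it is in place, collecting the six resulting pieces reproduces \eqref{mainint}.
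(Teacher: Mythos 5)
Your proposal is correct and follows essentially the same route as the paper: integrating the formula of Lemma~\ref{lema3}, killing the left-hand side by the horizontal divergence formula, converting the $df(\xi_s)\Phi$-type terms via the \eqref{xi1}-based integration by parts (the paper's \eqref{firstt}--\eqref{thirdt}), and treating the torsion-divergence term through horizontal integration by parts, the second Ricci identity, the symmetry of $T_{\xi_s}$ (valid since $U=0$ in dimension seven), and the Leibniz rule. Your final "vertical divergence" step is exactly the paper's mechanism \eqref{divrel}--\eqref{twodiv}, i.e.\ the Riemannian divergence theorem combined with \eqref{diferfor}, \eqref{trtor}, \eqref{trtor1} and $Vol_\eta=C\,d\mu^g$, applied to the vertical field $\sum_s T(\xi_s,\nabla f,\nabla f)\xi_s$, so the cancellation you anticipate does indeed hold.
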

\begin{proof}
We begin with integrating over $M$ the both sides of \eqref{sublap}. We shall work as before in a QC-normal frame in view of the $Sp(n)Sp(1)$-invariance of the considerating tensors. Having in mind the divergence formula, we shall simplify some of the terms that appear under the integral.

Using \eqref{xi1} and an integration by parts, after some standard calculations we get the next identities
\begin{equation}\label{firstt}
\int_M\sum_{s=1}^3df(\xi_s)dS(I_s\nabla f)\,Vol_{\eta}=-\frac14\int_M\sum_{s=1}^3(I_s\nabla f)^2S\,Vol_{\eta},
\end{equation}
\begin{equation}\label{secondt}
\int_M\sum_{s=1}^3df(\xi_s)(\nabla_{e_a}T^0)(e_a,I_s\nabla f)\,Vol_\eta=-\frac14\int_M\sum_{s=1}^3I_s\nabla f\Big((\nabla_{e_a}T^0)(e_a,I_s\nabla f)\Big)\,Vol_{\eta},
\end{equation}
\begin{multline}\label{thirdt}
\int_M\sum_{s=1}^3df(\xi_s)(\nabla_{e_a}T)(\xi_u,e_a,I_t\nabla f)\,Vol_{\eta}\\=-\frac14\int_M\sum_{s=1}^3I_s\nabla f\Big((\nabla_{e_a}T)(\xi_u,e_a,I_t\nabla f)\Big)\,Vol_{\eta}.
\end{multline}

In order to transform the term $\int_M\sum_{s=1}^3df(\xi_s)e_a\Big(T(\xi_s,e_a,\nabla f)\Big)\,Vol_{\eta},$ let us give some auxiliary notions and facts. We shall denote by $div^{\nabla}$ and $div^{{\nabla}^g}$ the divergences corresponding to the Biquard connection $\nabla$ and to the Levi-Civita connection $\nabla^g,$ respectively. For any vertical vector field $\xi$ on a QC manifold of dimension $4n+3$ we have 
\begin{multline}\label{divrel}
div^{{\nabla}^g}(\xi)=\sum_{a=1}^{4n}g(\nabla_{e_a}^g\xi,e_a)+\sum_{s=1}^3g(\nabla_{\xi_s}^g\xi,\xi_s)=\sum_{a=1}^{4n}g(\nabla_{e_a}\xi,e_a)+\sum_{s=1}^3g(\nabla_{\xi_s}\xi,\xi_s)\\=div^{\nabla}(\xi),
\end{multline}
where for the second equality we used \eqref{diferfor} and the properties of the torsion tensor \eqref{trtor} and \eqref{trtor1}. Because of the volume form $Vol_{\eta}$ differs from the Riemannian volume form $d\mu^g$ by a constant multiplier $C$, $Vol_\eta=C.d\mu^g,$ we get by the Riemannian divergence formula and \eqref{divrel} 
\begin{equation}\label{twodiv}
\int_M div^{\nabla}(\xi)\,Vol_{\eta}=C\int_M div^{\nabla}(\xi)\,d\mu^g=C\int_M div^{{\nabla}^g}(\xi)\,d\mu^g=0.
\end{equation}
We have consecutively
\begin{multline}\label{lel}
\int_M\sum_{s=1}^3df(\xi_s)e_a\Big(T(\xi_s,e_a,\nabla f)\Big)\,Vol_{\eta}=-\int_M\sum_{s=1}^3\nabla^2f(e_a,\xi_s)T(\xi_s,e_a,\nabla f)\,Vol_\eta\\=-\int_M\sum_{s=1}^3\Big[T(\xi_s,e_a,\nabla f)T(\xi_s,e_a,\nabla f)+\nabla^2f(\xi_s,e_a)T(\xi_s,e_a,\nabla f)\Big]\,Vol_{\eta}\\=-\int_M\sum_{s=1}^3\Big[|T(\xi_s,\nabla f)|^2-df(e_a)\xi_s\Big(T(\xi_s,e_a,\nabla f)\Big)\Big]\,Vol_\eta\\=\int_M\sum_{s=1}^3\Big[-|T(\xi_s,\nabla f)|^2+\frac{1}{2}(\nabla_{\xi_s}T)(\xi_s,\nabla f,\nabla f)\Big]\,Vol_\eta,
\end{multline}
where we used an integration by parts for the first equality of the above chain, next we took account the second Ricci identity in \eqref{boh2} to obtain the second one, and last, in order to get the third and the fourth equalities  we used \eqref{twodiv}  for the vertical vector field $\xi:=T(\xi_s,\nabla f,\nabla f)\xi_s.$

Now, substituting \eqref{firstt}, \eqref{secondt}, \eqref{thirdt} and \eqref{lel} in the integrated over $M$ equality \eqref{sublap}, we get \eqref{mainint}.
\end{proof}

An important role for obtaining the desired estimate plays the following integral equality
\begin{equation}\label{maininteg}
\int_M\sum_{s=1}^3\Big(df(\xi_s)\Big)^2\,Vol_{\eta}=\frac{1}{4}\int_M\sum_{s=1}^3df(I_se_a)d(\xi_sf)(e_a)\,Vol_{\eta},
\end{equation}
which follows easily by \eqref{xi1} and an integration by parts.
We have the next chain of relations:
\begin{multline}\label{ch1}
\sum_{s=1}^3\int_M\lambda\Big(df(\xi_s)\Big)^2\,Vol_{\eta}=\sum_{s=1}^3\int_M\frac{\lambda}{4}df(I_se_a)d(\xi_sf)(e_a)\,Vol_{\eta}\\\leq\sum_{s=1}^3\Big[\int_M\frac{\lambda^2}{16}\Big(df(I_se_a)\Big)^2\,Vol_{\eta}\Big]^\frac12\Big[\int_M\Big(d(\xi_sf)(e_a)\Big)^2\,Vol_{\eta}\Big]^\frac12\\\leq\frac12\sum_{s=1}^3\Big[\int_M\frac{\lambda^2}{16}\Big(df(I_se_a)\Big)^2\,Vol_{\eta}+\int_M\Big(d(\xi_sf)(e_a)\Big)^2\,Vol_{\eta}\Big]\\=\frac{3\lambda^2}{32}\int_M|\nabla f|^2\,Vol_{\eta}+\frac{1}{2}\sum_{s=1}^3\int_M|\nabla(\xi_sf)|^2\,Vol_{\eta}.
\end{multline}
For the above chain we  used \eqref{maininteg} to obtain the first equality and the Cauchy-Schwarz inequality for the integral scalar product to get the first inequality. The second inequality is obtained in an obvious manner. 

Using the notation $A(X)$ from the condition of Theorem~\ref{Main}, the equality \eqref{mainint} takes the form $$\int_M\sum_{s=1}^3|\nabla(\xi_sf)|^2\,Vol_{\eta}=\int_M\Big[A(\nabla f)+\sum_{s=1}^3\lambda \Big(df(\xi_s)\Big)^2\Big]\,Vol_{\eta},$$
which, combined with \eqref{ch1}, gives the next integral inequality
\begin{equation}\label{ineqm}
\sum_{s=1}^3\int_M|\nabla(\xi_sf)|^2\,Vol_{\eta}\leq\int_M\Big[2A(\nabla f)+\frac{3\lambda^2}{16}|\nabla f|^2\Big]\,Vol_{\eta}.
\end{equation}

For any constant $b>0$ we get the next chain of relations:
\begin{multline}\label{ch2}
\sum_{s=1}^3\int_M\Big(df(\xi_s)\Big)^2\,Vol_{\eta}=\sum_{s=1}^3\int_M\frac{\sqrt{b}}{4}df(I_se_a)\frac{1}{\sqrt{b}}d(\xi_sf)(e_a)\,Vol_{\eta}\\\leq\sum_{s=1}^3\Big[\frac{b}{16}\int_M\Big(df(I_se_a)\Big)^2\,Vol_{\eta}\Big]^\frac12\Big[\frac{1}{b}\int_M\Big(d(\xi_sf)(e_a)\Big)^2\,Vol_{\eta}\Big]^\frac12\\\leq\frac{3b}{32}\int_M|\nabla f|^2\,Vol_{\eta}+\frac{1}{2b}\sum_{s=1}^{3}\int_M|\nabla(\xi_sf)|^2\,Vol_{\eta},
\end{multline}
where we used \eqref{maininteg} to obtain the equality and we took account the Cauchy-Schwarz inequality for the integral scalar product to get the first inequality. The second inequality is obvious.
Combining \eqref{ineqm} and \eqref{ch2}, we get the next key inequality
\begin{equation}\label{key}
\sum_{s=1}^3\int_M\Big(df(\xi_s)\Big)^2\,Vol_{\eta}\leq\int_M\Big[\frac{3b}{32}|\nabla f|^2+\frac{1}{b}A(\nabla f) +\frac{3\lambda^2}{32b}|\nabla f|^2\Big]\,Vol_{\eta}.
\end{equation}

Substituting \eqref{key} in \eqref{mainineq}, we obtain
\begin{equation}\label{key1}
\int_M\Big[Ric(\nabla f,\nabla f)-2T^0(\nabla f,\nabla f)-\frac{12}{b}A(\nabla f)+(-\frac34\lambda-\frac{9b}{8}-\frac{9\lambda^2}{8b})|\nabla f|^2\Big]\,Vol_{\eta}\leq 0,
\end{equation}

and if we ask the a-priori condition $$Ric(X,X)-2T^0(X,X)-\frac{12}{b}A(X)\geq k_0g(X,X)\hspace{1cm}\textit{for any}\hspace{0.5cm}X\in\Gamma(H),$$ then we have from  \eqref{key1}
\begin{equation*}
\int_M\Big(-\frac34\lambda-\frac{9b}{8}-\frac{9\lambda^2}{8b}+k_0\Big)|\nabla f|^2\,Vol_{\eta}\leq 0.
\end{equation*}
The last inequality implies 
\begin{equation*}\label{key2}
-\frac34\lambda-\frac{9b}{8}-\frac{9\lambda^2}{8b}+k_0\leq 0,
\end{equation*}
which, after the choice $b=\frac{k_0}{3},$ becomes 
\begin{equation}\label{key3}
(3\lambda-k_0)(9\lambda+5k_0)\geq 0.
\end{equation}
Since $9\lambda +5k_0>0$, the inequality \eqref{key3} gives the estimate
\begin{equation}\label{shest}
\lambda\geq\frac{k_0}{3},
\end{equation}
which ends the proof of Theorem~\ref{Main}.
\section{Proof of Corollary~\ref{corol}}
In \cite[Remark 4.1.]{IPV3} the authors give the next identity
\begin{equation}\label{cora}
10T^0(\nabla f,\nabla f)+6S|\nabla f|^2=k_0|\nabla f|^2,
\end{equation}
which holds for the extremal eigenfunction $f$ in the case of equality in Theorem~\ref{mainpan}, i.e. $\lambda=\frac13k_0$. Assuming the condition a) in Corollary~\ref{corol} and taking account \eqref{cora},  we obtain \eqref{shestS}.

In a similar way, the case of equality in Theorem~\ref{Main}, i.e. $\lambda=\frac13k_0$, together with the a-priori condition \eqref{7Dcondm-app} and \eqref{key1} give us the identity
\begin{equation*}
Ric(\nabla f,\nabla f)-2T^0(\nabla f,\nabla f)-\frac{36}{k_0}A(\nabla f)=k_0|\nabla f|^2,
\end{equation*}
which holds for the extremal eigenfunction $f$. Using the first formula in \eqref{sixtyfour}, the upper identity can be rewrited as
\begin{equation}\label{corb}
6S|\nabla f|^2+2T^0(\nabla f,\nabla f)-\frac{36}{k_0}A(\nabla f)=k_0|\nabla f|^2.
\end{equation}
Now, obviously the assumption of the condition b) in Corollary~\ref{corol} gives us the desired estimate \eqref{shestS}, which ends the proof of Corollary~\ref{corol}.

\textbf{Acknowledgments} The research is partially supported by Contract N\textsuperscript{\underline{o}} BG051PO001-3.3.06-0052 with the Bulgarian Ministry of Education, Youth and Science and Contract N\textsuperscript{\underline{o}} 156/2013 with the University of Sofia "St. Kl. Ohridski".

\end{document}